\newtheorem{thm}{Theorem}[section]
\newtheorem{lem}[thm]{Lemma}
\theoremstyle{definition}
\theoremstyle{definition}
\begin{document}
\title[Bifurcation structure of 
steady-states for a cooperative model]
{Bifurcation structure of steady-states for a cooperative
model with population flux by attractive transition}
 \thanks{This research was
partially supported by JSPS KAKENHI Grant Number 22K03379.}
\author[M. Adachi]{Masahiro Adachi$^\dag$}
\author[K. Kuto]{Kousuke Kuto$^\ddag$}
\thanks{$\dag$ Department of Pure and Applied Mathematics, 
Graduate School of Fundamental Science and Engineering,
Waseda University, 
3-4-1 Ohkubo, Shinjuku-ku, Tokyo 169-8555, Japan.}
\thanks{$\ddag$ Department of Applied Mathematics, 
Waseda University, 
3-4-1 Ohkubo, Shinjuku-ku, Tokyo 169-8555, Japan.}
\thanks{{\bf E-mail:} \texttt{kuto@waseda.jp}}
\date{\today}

\begin{abstract} 
This paper studies the steady-states
to a diffusive Lotka-Volterra cooperative model
with population flux by attractive transition.
The first result gives many bifurcation points
on the branch of the positive constant solution
under the weak cooperative condition.
The second result shows each steady-state approaches
a solution of the scalar field equation as
the coefficients
of the flux tend to infinity.
Indeed, the numerical simulation using pde2path
exhibits 
the global bifurcation branch
of the cooperative model with large population flux
is near that of
the scalar field equation.
\end{abstract}

\subjclass[2020]{35B35, 35B36, 35B32, 35J20, 92D25}
\keywords{nonlinear diffusion,
population model, 
bifurcation,
the scalar field equation,
perturbation}
\maketitle

\section{Introduction}
In this paper, we are concerned with the following
diffusive Lotka-Volterra cooperative model
with density-dependent nonlinear diffusion terms:
\begin{equation}\label{para}
\begin{cases}
u_{t}=d_{1}\Delta u+\alpha\nabla\cdot\biggl[
v^{2}\,\nabla\biggl(\dfrac{u}{v}\biggr)\biggr]
+u(a_{1}-b_{1}u+c_{1}v),\ \ \ 
&(x,t)\in\Omega\times (0,T),\\ 
v_{t}\,=d_{2}\Delta v +\beta\nabla\cdot\biggl[
u^{2}\,\nabla\biggl(\dfrac{v}{u}\biggr)\biggr]
+v(-a_{2}+b_{2}u-c_{2}v),\ \ \ 
&(x,t)\in\Omega\times (0,T),\\
\partial_{\nu}u=
\partial_{\nu}v=0,\ \ \ 
&(x,t)\in\partial\Omega\times (0,T),\\
u(x,0)=u_{0}(x)\ge 0,\ \ 
v(x,0)=v_{0}(x)\ge 0,\ \ \ 
&x\in\Omega,
\end{cases}
\end{equation}  
where 
$\Omega\,(\subset\mathbb{R}^{N})$ is 
a bounded domain with a smooth boundary
$\partial\Omega$;
unknown functions $u(x,t)$ and $v(x,t)$ are 
the population densities at location $x\in\Omega$ and time $t>0$ 
of two species in a symbiotic relationship with each other. 
We assume the homogeneous Neumann boundary conditions that 
each species has the zero derivative in the direction of 
the outward unit normal vector $\nu$ on the boundary 
$\partial\Omega$ of the habitat $\Omega$.
The coefficients $a_i$, $b_i$ and $c_i$
$(i=1,2)$ in the reaction terms are all positive constants.
The growth rate of the species corresponding to $u$ 
is $a_{1}>0$ assuming natural increase, 
while that of the species corresponding to $v$ 
is $-a_{2}<0$ assuming natural decrease.
The positive coefficients $c_1$ and $b_2$ indicates
the magnitude of symbiotic interactions between different species.
The positive constants
$d_1$ and $d_2$ 
stand for random diffusion rates of each individual
of the species.
The strongly coupled diffusion terms 
$\nabla\cdot
[v^{2}\nabla (u/v)]$
and
$\nabla\cdot
[u^{2}\nabla (v/u)]$
describe a situation in which 
each species migrates with higher probability 
to areas with higher densities of different species.
In terms of the diffusion process in ecology,
the strongly coupled diffusion term microscopically models
a situation where the transition probability of
each individual of each species depends on the density
of the other species at the point of arrival
(Okubo and Levin
\cite[Section 5.4]{OL}).
Although the strongly coupled diffusion term is placed in \cite{OL} 
as a prototype of bio-diffusion, 
along with the cross-diffusion term $\Delta (uv)$
and the chemotaxis term $\nabla\cdot (u\nabla v)$, 
there have not been many studies from the perspective 
of reaction-diffusion systems.
Concerning a prey-predator model with the
strongly coupled diffusion term as in \eqref{para},
Oeda and the second author \cite{OK, KO} studied the set of
stationary solutions.
Furthermore, for the prey-predator model,
Heihoff and Yokota \cite{HY} established
the global existence and boundedness of time-depending
solutions and derived the convergence of solutions to the 
positive constant solution.

Throughout this paper, we assume the
weak cooperative condition
\begin{equation}\label{wc}
\dfrac{c_{1}}{c_{2}}<\dfrac{b_{1}}{b_{2}}<\dfrac{a_{1}}{a_{2}}.
\end{equation}
Under the condition \eqref{wc},
it is easy to check that 
\eqref{para} without the initial conditions
admits the positive constant solution
$$
(u^{*}, v^{*})=\biggl(
\dfrac{a_{1}c_{2}-a_{2}c_{1}}{b_{1}c_{2}-b_{2}c_{1}},
\dfrac{a_{1}b_{2}-a_{2}b_{1}}{b_{1}c_{2}-b_{2}c_{1}}
\biggr),
$$
and moreover, $(u^{*}, v^{*})$ is 
globally asymptotically stable in the kinetic system
of the corresponding ordinary differential equation
with $d_{i}=\alpha=\beta=0$
(e.g., Goh \cite{Go}).
Furthermore, also in the linear diffusion case where
$d_{i}>0$ and $\alpha =\beta =0$,
the well-known result by Weinberger \cite{We}
on the invariant region of a class of
reaction-diffusion system ensures that
$(u^{*}, v^{*})$ is globally asymptotically stable
in the sense that all positive solutions of \eqref{para}
converge to $(u^{*}, v^{*})$
uniformly in $\overline{\Omega}$ as $t\to\infty$.

Therefore, it is natural to ask whether the
strongly coupled diffusion term 
$\nabla\cdot
[v^{2}\nabla (u/v)]$
or
$\nabla\cdot
[u^{2}\nabla (v/u)]$
can produce nonconstant
steady-states and the nontrivial spatio-temporal 
dynamics instead of the global stability of 
$(u^{*}, v^{*})$.
With those objectives in mind
this paper focuses on the effect of the strongly coupled diffusion term
on the set of
stationary solutions.
Then we study
the stationary problem which consists of the nonlinear
elliptic equations
\begin{subequations}\label{st}
\begin{align}
\label{sta}
&d_{1}\Delta u+\alpha\nabla\cdot\biggl[
v^{2}\,\nabla\biggl(\dfrac{u}{v}\biggr)\biggr]
+u(a_{1}-b_{1}u+c_{1}v)=0
\ &\mbox{in}\ \Omega,\\
\label{stb}
&d_{2}\Delta v +\beta\nabla\cdot\biggl[
u^{2}\,\nabla\biggl(\dfrac{v}{u}\biggr)\biggr]
+v(-a_{2}+b_{2}u-c_{2}v)=0
\ &\mbox{in}\ \Omega,
\end{align}
subject to the homogeneous Neumann boundary conditions
\begin{equation}\label{stc}
\partial_{\nu}u=\partial_{\nu}v=0\ \ \mbox{on}\ \partial\Omega
\end{equation}
and the nonnegative conditions
\begin{equation}\label{std}
u\ge 0\ \ \mbox{and}\ \ v\ge 0\ \ \mbox{in}\ \Omega.
\end{equation}
\end{subequations}
In what follows,
we call $(u,v)$ a positive solution
if $(u,v)$ satisfies \eqref{sta}-\eqref{stc}
and
$u>0$ and $v>0$ in $\Omega$.
Hence 
a positive solution corresponds to a coexistence
steady-state of the two species.

In order to find nonconstant solutions of \eqref{st},
we resort to the bifurcation theory established by
Crandall and Rabinowitz \cite{CR} regarding $d_{2}$ as the bifurcation
parameter.
The first result (Theorem \ref{bifthm}) asserts that
if $(\alpha, \beta )\in\mathcal{R}_{j}$ as in Fig.1
for each $j\in\mathbb{N}$,
then there appears a bifurcation point $d^{(j)}_{*}$ 
in the sense that
a local branch of nonconstant solutions bifurcate from
$(u^{*}, v^{*})$ at 
$d_{2}=d^{(j)}_{*}$.
Next the asymptotic behavior of nonconstant solutions of 
\eqref{st} as $\alpha\to\infty$ and/or $\beta\to\infty$
will be discussed.
The second result (Theorem \ref{limthm}) asserts that
if $\alpha=\alpha_{n}\to\infty$ and/or
$\beta=\beta_{n}\to\infty$ with 
$\alpha_{n}/\beta_{n}\to\gamma\in [0,\infty ]$,
then
any sequence of positive nonconstant solutions
$(u_{n}, v_{n})$ converges to
$(\tau^{*}v, v)$ with some positive function $v$
and $\tau^{*}=u^{*}/v^{*}$
passing to a subsequence if necessary.
Furthermore, it will be shown that
the limit function $v$ can be classified 
depending on 
$\gamma:=\lim_{n\to\infty}\alpha_{n}/\beta_{n}$:
If $\gamma\in [0,A\tau^{*})$ with $A:=a_{1}/a_{2}$,
then the limit $v$ is equal to the constant $v^{*}$,
that is, any sequence $\{(u_{n}, v_{n})\}$
converges to $(u^{*}, v^{*})$,
whereas if $\gamma\in (A\tau^{*},\infty]$,
then $\{(u_{n}, v_{n})\}$ approaches a positive solution
of the scalar
field equation.
Consequently, it can be said from this result that
nonconstant solutions of \eqref{st} can be 
approximated by some solution of the
scalar field equation when the strongly coupled
diffusion terms are very strong.
%Actually, the third result establishes
%some global bifurcation curve of $j$-mode
%solutions of \eqref{st} with $\Omega =(0,1)$ 
%when
%$(\alpha, \beta)\in\mathcal{R}_{j}$ and 
%both are very large.
 
In this paper, the eigenvalue problem of the Laplace equation:
$$
-\Delta u=\lambda u\quad\mbox{in}\ \Omega,\qquad
\partial_{\nu}u=0
\quad\mbox{on}\ \partial\Omega
$$
will play an important role.
We denote the all eigenvalues by
$$0=\lambda_{0}<\lambda_{1}\le\lambda_{2}\le\cdots\le\lambda_{j}\le\cdots$$
counting multiplicity and
denote by
$\varPhi_{j}$ any $L^{2}(\Omega )$ normalized eigenfunciton
corresponding to the eigenvalue $\lambda_{j}$.
Furthermore, the usual norm of $L^{p}(\Omega )$ will be denoted by
$$
\|u\|_{p}:=\biggl(
\int_{\Omega}|u(x)|^{p}\biggr)^{1/p}\quad\mbox{if}\ 1\le p<\infty$$
and
$\|u\|_{\infty}=\sup_{x\in\Omega}|u(x)|$.
As typical functional spaces for the
bifurcation analysis concerning \eqref{st}, 
we define
$$X_{p}:=W^{2,p}_{\nu }(\Omega )=\{\,u\in W^{2,p}(\Omega )\,:\,
\partial_{\nu}u=0\ \mbox{on}\ \partial\Omega\,\}$$
and
$$\boldsymbol{X}_{p}:=X_{p}\times X_{p},\quad
\boldsymbol{Y}_{p}:=L^{p}(\Omega )\times L^{p}(\Omega )$$
for any $p>N$.
\section{Main results}
The first result asserts that
bifurcation points appear on the branch 
$$\varGamma_{0}:=\{\,(d_{2},u^{*},v^{*})
\in\mathbb{R}_{+}\times\boldsymbol{X}_{p}\,\},
\quad\mbox{where}\quad \mathbb{R}_{+}:=(0,\infty),$$
of
the positive constant solution $(u^{*}, v^{*})$
by the effect of
the strongly coupled diffusion terms.
\begin{thm}\label{bifthm}
Assume \eqref{wc}.
Let $\lambda_{j}$ with $j\ge 1$ be any simple eigenvalue.
Suppose that 
\begin{equation}\label{Rj}
(\alpha, \beta )\in 
\mathcal{R}_{j}:=
\{\,(\alpha, \beta )\in\mathbb{R}_{\ge 0}^{2}\,:\,
a_{2}v^{*}\lambda_{j}\alpha-
(a_{1}+d_{1}\lambda_{j})u^{*}\lambda_{j}\beta
-(c_{2}d_{1}\lambda_{j}
+a_{1}c_{2}-a_{2}c_{1})v^{*}>0\,\},
\end{equation}
where $\mathbb{R}_{\ge 0}:=[0,\infty )$.
Then, a simple curve of nonconstant positive solutions
of \eqref{st} bifurcates from the branch $\varGamma_{0}$ of 
the positive constant solution at 
\begin{equation}\label{djdef}
d_{2}=d_{*}^{(j)}(\alpha, \beta):=
\dfrac{a_{2}v^{*}\lambda_{j}\alpha-
(a_{1}+d_{1}\lambda_{j})u^{*}\lambda_{j}\beta
-(c_{2}d_{1}\lambda_{j}
+a_{1}c_{2}-a_{2}c_{1})v^{*}}
{\{\,(d_{1}+\alpha v^{*})\lambda_{j}+b_{1}u^{*}\,\}\lambda_{j}}.
\end{equation}
To be precise,
there exist a neighborhood $\mathcal{U}_{j}
\subset\mathbb{R}\times\boldsymbol{X}_{p}$ of
$(d^{(j)}_{*}, u^{*}, v^{*})$ and
a small positive number $\delta_{j}$ such that
all solutions of \eqref{st} 
(treating $d_{2}$ as a
positive parameter) contained in $\mathcal{U}_{j}$
consist of the union of
$\varGamma_{0}\cap\mathcal{U}_{j}$ and
a simple curve
$$
\varGamma_{j}\,:\,
\left[
\begin{array}{c}
d_{2}\\
u\\
v
\end{array}
\right]=
\left[
\begin{array}{l}
d^{(j)}_{*}(\alpha, \beta)\\
u^{*}\\
v^{*}
\end{array}
\right]
+
\left[
\begin{array}{l}
q(s)\\
s(\varPhi_{j}+\widetilde{u}(s))\\
s(\kappa_{j}\varPhi_{j}+\widetilde{v}(s))\\
\end{array}
\right]
\quad\mbox{for}\ s\in (-\delta_{j}, \delta_{j}),
$$
where $\kappa_{j}=
(b_{1}u^{*}+(d_{1}+\alpha v^{*})\lambda_{j})/
(c_{1}+\lambda_{j}\alpha )u^{*}$
and
$(q(s), \widetilde{u}(s), \widetilde{v}(s))
\in\mathbb{R}\times\boldsymbol{X}_{p}$
are smooth functions satisfying
$(q(0), \widetilde{u}(0), \widetilde{v}(0))
=(0,0,0)$ and
$\int_{\Omega}\varPhi_{j}\widetilde{u}(s)=
\int_{\Omega}\varPhi_{j}\widetilde{v}(s)=0
$ for $s\in (-\delta_{j}, \delta_{j})$.
\end{thm}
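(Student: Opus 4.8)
The plan is to cast \eqref{st} as an abstract bifurcation equation and apply the Crandall--Rabinowitz theorem \cite{CR} with $d_2$ as the parameter. First I would rewrite the strongly coupled terms in divergence form: since $v^{2}\nabla(u/v)=v\nabla u-u\nabla v$ and $u^{2}\nabla(v/u)=u\nabla v-v\nabla u$, we get $\nabla\cdot[v^{2}\nabla(u/v)]=v\Delta u-u\Delta v$ and $\nabla\cdot[u^{2}\nabla(v/u)]=u\Delta v-v\Delta u$, so that \eqref{sta}--\eqref{stb} becomes the normally elliptic quasilinear system
\[(d_{1}+\alpha v)\Delta u-\alpha u\,\Delta v+u(a_{1}-b_{1}u+c_{1}v)=0,\qquad (d_{2}+\beta u)\Delta v-\beta v\,\Delta u+v(-a_{2}+b_{2}u-c_{2}v)=0.\]
Because $p>N$ gives $W^{2,p}(\Omega)\hookrightarrow C^{1}(\overline{\Omega})$, the left-hand sides define a smooth (indeed analytic) map $F:\mathbb{R}\times\boldsymbol{X}_{p}\to\boldsymbol{Y}_{p}$ with $F(d_{2},u^{*},v^{*})\equiv 0$, so $\varGamma_{0}$ is a curve of trivial solutions.

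Next I would compute $L:=F_{(u,v)}(d_{2},u^{*},v^{*})$. Using $\Delta u^{*}=\Delta v^{*}=0$ together with the equilibrium identities $b_{1}u^{*}=a_{1}+c_{1}v^{*}$ and $c_{2}v^{*}=b_{2}u^{*}-a_{2}$, the reaction contributions simplify and $L$ becomes a constant-coefficient operator diagonalized by the eigenbasis $\{\varPhi_{k}\}$: on the $k$-th mode it acts as a $2\times 2$ matrix $M_{k}(d_{2})$ with entries assembled from $-(d_{1}+\alpha v^{*})\lambda_{k}-b_{1}u^{*}$, $u^{*}(\alpha\lambda_{k}+c_{1})$, $v^{*}(\beta\lambda_{k}+b_{2})$, and $-(d_{2}+\beta u^{*})\lambda_{k}-c_{2}v^{*}$. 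A nonzero kernel element on mode $k$ exists iff $\det M_{k}(d_{2})=0$, and since $d_{2}$ enters linearly this fixes a unique characteristic value. For $k=j$ the first row yields the kernel direction $(\varPhi_{j},\kappa_{j}\varPhi_{j})$ with $\kappa_{j}$ as stated, and solving $\det M_{j}(d_{2})=0$ for $d_{2}$ returns exactly $d_{*}^{(j)}(\alpha,\beta)$; the hypothesis $(\alpha,\beta)\in\mathcal{R}_{j}$ is precisely positivity of the numerator, so $d_{*}^{(j)}>0$ is an admissible diffusion rate.

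Then I would verify the three Crandall--Rabinowitz hypotheses at $(d_{*}^{(j)},u^{*},v^{*})$. For $\operatorname{codim}\operatorname{Range}L=1$: the principal part of $L$ carries the diffusion matrix whose determinant $d_{1}d_{2}+d_{1}\beta u^{*}+d_{2}\alpha v^{*}$ and trace $d_{1}+d_{2}+\alpha v^{*}+\beta u^{*}$ are positive, so $L$ is normally elliptic, hence Fredholm of index $0$ (the lower-order reaction terms being relatively compact), giving $\operatorname{codim}\operatorname{Range}L=\dim\ker L$. For transversality, $F_{d_{2},(u,v)}[\varPhi_{j},\kappa_{j}\varPhi_{j}]=(0,-\kappa_{j}\lambda_{j}\varPhi_{j})$, and writing $\operatorname{Range}L$ as the annihilator of the adjoint null vector $(\theta_{1},\theta_{2})$ of $M_{j}^{\mathsf T}$ one checks that $\theta_{2}\neq 0$ is forced (otherwise $\theta_{1}=0$), so this vector lies outside the range. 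The crux is $\dim\ker L=1$: simplicity of $\lambda_{j}$ removes degeneracy inside the $j$-th mode, but I must also exclude every other mode, i.e.\ show $\det M_{k}(d_{*}^{(j)})\neq 0$ for all $\lambda_{k}\neq\lambda_{j}$. This reduces to proving that the characteristic value, viewed as a function of the eigenvalue, is not attained at any $\lambda_{k}\neq\lambda_{j}$, which I expect to be the main obstacle and would settle by a monotonicity (injectivity) analysis of that rational characteristic function. Once the kernel is one-dimensional, Crandall--Rabinowitz delivers the curve $\varGamma_{j}$ in the stated parametrized form with the normalization $\int_{\Omega}\varPhi_{j}\widetilde{u}=\int_{\Omega}\varPhi_{j}\widetilde{v}=0$; finally, $C^{1}$-closeness to the strictly positive constant $(u^{*},v^{*})$ gives $u,v>0$ for small $|s|$, while the nonconstant leading term $s\varPhi_{j}$ (as $\lambda_{j}>0$) guarantees that the bifurcating solutions are nonconstant.
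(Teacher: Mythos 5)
Your overall strategy is exactly the paper's: rewrite the flux terms so that \eqref{st} becomes the quasilinear system $(d_{1}+\alpha v)\Delta u-\alpha u\Delta v+u(a_{1}-b_{1}u+c_{1}v)=0$, $(d_{2}+\beta u)\Delta v-\beta v\Delta u+v(-a_{2}+b_{2}u-c_{2}v)=0$, view the left-hand side as a smooth map $F:\mathbb{R}\times\boldsymbol{X}_{p}\to\boldsymbol{Y}_{p}$, Fourier-decompose the linearization into $2\times 2$ mode matrices, read off $d_{*}^{(j)}$ and the kernel direction $(1,\kappa_{j})\varPhi_{j}$ from the $j$-th matrix, establish Fredholm index zero, check the transversality condition (your computation $(0,-\kappa_{j}\lambda_{j}\varPhi_{j})$ and the nonvanishing pairing with the adjoint null vector are precisely the paper's), and conclude by bifurcation from a simple eigenvalue. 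The only cosmetic differences are that the paper invokes Shi--Wang's quasilinear version of Crandall--Rabinowitz rather than \cite{CR} directly, and identifies $\operatorname{Ran}L$ explicitly as the orthogonal complement of $(1,\kappa_{j}^{*})\varPhi_{j}$ with $\kappa_{j}^{*}=(b_{1}u^{*}+(d_{1}+\alpha v^{*})\lambda_{j})/((b_{2}+\lambda_{j}\beta)v^{*})$, whereas you argue index zero via normal ellipticity; both are fine.

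The genuine problem is the step you yourself flagged as the crux, $\dim\ker L=1$ across \emph{all} modes: your proposed repair by monotonicity (injectivity) of the characteristic function cannot work. Writing that function as
\begin{equation*}
f(\lambda)=\dfrac{-d_{1}u^{*}\beta\lambda^{2}+(a_{2}v^{*}\alpha-a_{1}u^{*}\beta-c_{2}d_{1}v^{*})\lambda-(a_{1}c_{2}-a_{2}c_{1})v^{*}}{\{(d_{1}+\alpha v^{*})\lambda+b_{1}u^{*}\}\lambda},
\end{equation*}
one has $f(\lambda)\to-\infty$ as $\lambda\to 0^{+}$ (since $a_{1}c_{2}-a_{2}c_{1}>0$ by \eqref{wc}) and $f(\lambda)\to-d_{1}u^{*}\beta/(d_{1}+\alpha v^{*})\le 0$ as $\lambda\to\infty$. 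So whenever $f$ attains a positive value --- which is exactly the hypothesis $(\alpha,\beta)\in\mathcal{R}_{j}$ --- it attains every slightly smaller positive value at least twice on $(0,\infty)$; $f$ is never injective where it matters. Consequently a coincidence $d_{*}^{(k)}(\alpha,\beta)=d_{*}^{(j)}(\alpha,\beta)$ with $\lambda_{k}\neq\lambda_{j}$ cannot be excluded by monotonicity, and if the Neumann spectrum happens to contain such a $\lambda_{k}$, the kernel is two-dimensional and the simple-eigenvalue theorem does not apply. To be fair, the paper's own proof is silent on precisely this point: it passes from ``$\ker L(d_{2})$ nontrivial iff some $\det A_{k}(d_{2};\alpha,\beta)=0$'' to ``$\ker L(d_{*}^{(j)})=\operatorname{Span}\{(1,\kappa_{j})\varPhi_{j}\}$'' without ruling out other modes, i.e.\ it implicitly assumes the nonresonance condition $d_{*}^{(k)}\neq d_{*}^{(j)}$ for all $k$ with $\lambda_{k}\neq\lambda_{j}$. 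The honest fix --- for your argument and for the paper --- is to impose this (generic) nonresonance condition explicitly, observing that since $f(\lambda)=d$ reduces to a quadratic equation in $\lambda$, at most one other eigenvalue can collide, so the bad parameter set is a finite union of curves in $\mathcal{R}_{j}$.
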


\begin{figure}
\begin{center}
{\includegraphics*[scale=.5]{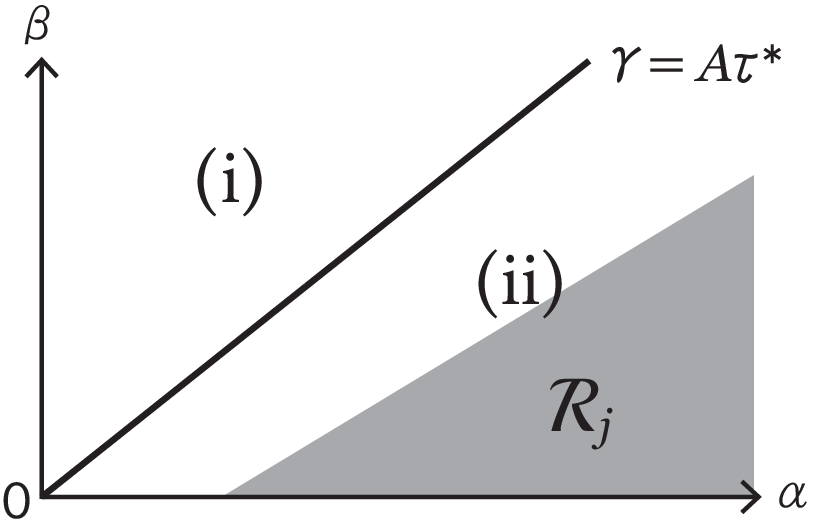}}\\
\caption{$\mathcal{R}_{j}$ and $\gamma =A\tau^{*}$ on the $\alpha\beta$ plane}
\end{center}
\end{figure}

See Figure 1 for a depiction of the region 
$\mathcal{R}_{j}$ on the $\alpha\beta$ plane.

The next result asserts that the limit
of positive solutions of \eqref{st} as
$\alpha\to\infty$ and/or $\beta\to\infty$
with $\alpha/\beta\to\gamma\in [0,\infty ]$
can be characterized by either 
$(u^{*}, v^{*})$ or
a positive solution to
the scalar field equation depending on $\gamma$.

\begin{thm}\label{limthm}
Assume \eqref{wc} and $N\le 3$.
Let $\{(\alpha_{n}, \beta_{n})\}\subset\mathbb{R}_{\ge 0}^{2}$
be any nonnegative sequence satisfying 
$\alpha_{n}/\beta_{n}\to\gamma\in [0,\infty ]$
and at least one of $\{\alpha_{n}\}$ and $\{\beta_{n}\}$
tends to $\infty$.
Let $\{(u_{n}, v_{n})\}$ be any sequence of 
positive nonconstant solutions of \eqref{st}
with $(\alpha, \beta )=(\alpha_{n}, \beta_{n})$.
Then there exists a positive function 
$v\in C^{2}(\overline{\Omega })$
such that
$$
\lim_{n\to\infty} (u_{n}, v_{n})=(\tau^{*}, 1)v
\quad\mbox{in}\ C^{1}(\overline{\Omega })\times
C^{1}(\overline{\Omega })
\quad\mbox{with}\ \ \tau^{*}=\dfrac{u^{*}}{v^{*}},$$
passing to
a subsequence if necessary.
Furthermore, the following properties hold true:
\begin{enumerate}[{\rm (i)}]
\item
If $\gamma\in [0, A\tau^{*})$ with $A=a_{1}/a_{2}$,
then $v=v^{*}$, more precisely in this case,
it follows that
$\lim_{n\to\infty}(u_{n}, v_{n})= (u^{*}, v^{*})$ in
$C^{1}(\overline{\Omega})\times C^{1}(\overline{\Omega })$
with the full sequence.
\item
If $\gamma\in (A\tau^{*}, \infty]$, then 
$v$ satisfies
\begin{equation}\label{lim}
\begin{cases}
d\Delta v+
\xi^{*}v(v-v^{*})
=0\quad&\mbox{in}\ \Omega,\\
%\quad\mbox{where}\quad
%(d, \xi^{*}):=\biggl(
%\tau^{*}d_{1}+\gamma d_{2},
%\dfrac{\gamma a_{2}-\tau^{*}a_{1}}{v^{*}}\biggr),\\
\partial_{\nu}v=0
\quad&\mbox{on}\ \partial\Omega,
\end{cases}
\end{equation}
where
$$
(d, \xi^{*}):=
\begin{cases}
(
\tau^{*}d_{1}+\gamma d_{2},
(\gamma a_{2}-\tau^{*}a_{1})/v^{*})
\quad&\mbox{if}\ \gamma<\infty,\\
(d_{2}, a_{2}/v^{*})
\quad&\mbox{if}\ \gamma =\infty.
\end{cases}
$$
\end{enumerate}
\end{thm}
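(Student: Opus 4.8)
The plan is to combine a priori bounds that are uniform in $n$ with the special divergence structure of the flux, and then to pass to the limit in a flux-cancelling linear combination of \eqref{sta} and \eqref{stb}. The starting point is the identity $v^2\nabla(u/v)=v\nabla u-u\nabla v=-u^2\nabla(v/u)$, which lets me recast the equations as $\nabla\cdot[(d_1+\alpha v)\nabla u-\alpha u\nabla v]+uf_1=0$ and $\nabla\cdot[(d_2+\beta u)\nabla v-\beta v\nabla u]+vf_2=0$, where $f_1:=a_1-b_1u+c_1v$ and $f_2:=-a_2+b_2u-c_2v$. Since the two fluxes differ only by the scalars $-\alpha$ and $\beta$, the combination $\beta\times\eqref{sta}+\alpha\times\eqref{stb}$ is free of the strongly coupled terms, which is what will produce the limiting scalar equation.

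The first and hardest step is to bound $(u_n,v_n)$ uniformly in $n$. Integrating \eqref{sta} and \eqref{stb} over $\Omega$ (the fluxes drop out by the Neumann condition) gives $b_1\|u_n\|_2^2=a_1\int_\Omega u_n+c_1\int_\Omega u_nv_n$ and $c_2\|v_n\|_2^2=b_2\int_\Omega u_nv_n-a_2\int_\Omega v_n$; Cauchy--Schwarz and elimination of $\|v_n\|_2$ yield $(b_1c_2-b_2c_1)\|u_n\|_2\le a_1c_2|\Omega|^{1/2}$, where $b_1c_2-b_2c_1>0$ by \eqref{wc}. This is an $L^2$ bound independent of $(\alpha_n,\beta_n)$. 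Upgrading it to a uniform $C^{1,\theta}(\overline\Omega)$ bound, together with a uniform positive lower bound $u_n,v_n\ge c_0>0$ obtained from a Harnack inequality for the divergence-form equations above, is where the real work lies: the diffusion coefficients $d_1+\alpha v$ and $d_2+\beta u$ blow up with $n$, so every estimate must be arranged to be uniform in the ellipticity constant. I expect this to be the main obstacle, and the hypothesis $N\le3$ to enter here through the Sobolev embeddings used in the bootstrap.

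Granting these bounds, I would exploit the flux. Testing \eqref{sta} with $u/v$ and integrating by parts (the Neumann condition gives $\partial_\nu(u/v)=0$) produces $\alpha_n\int_\Omega v_n^2|\nabla(u_n/v_n)|^2=-d_1\int_\Omega\nabla(u_n/v_n)\cdot\nabla u_n+\int_\Omega(u_n/v_n)u_nf_1$, whose right-hand side is uniformly bounded by the $C^1$ estimates; the symmetric computation testing \eqref{stb} with $v/u$ controls $\beta_n\int_\Omega u_n^2|\nabla(v_n/u_n)|^2$. Since at least one of $\alpha_n,\beta_n$ tends to $\infty$ and $u_n,v_n\ge c_0$, it follows that $\nabla(u_n/v_n)\to0$ in $L^2(\Omega)$. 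With the uniform $C^{1,\theta}$ bound and Arzel\`a--Ascoli, a subsequence satisfies $(u_n,v_n)\to(cv,v)$ in $C^1(\overline\Omega)\times C^1(\overline\Omega)$ for a positive constant $c$ and a positive function $v$. Passing to the limit in the two integral identities above gives $a_1\int_\Omega v+(c_1-b_1c)\|v\|_2^2=0$ and $-a_2\int_\Omega v+(b_2c-c_2)\|v\|_2^2=0$; dividing these (legitimate since $v>0$) forces $(a_1b_2-a_2b_1)c=a_1c_2-a_2c_1$, that is, $c=\tau^*$.

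Finally I would identify the equation for $v$. Dividing $\beta_n\times\eqref{sta}+\alpha_n\times\eqref{stb}$ by $\beta_n$ (when $\gamma<\infty$, in which case $\beta_n\to\infty$) removes the flux terms, and letting $n\to\infty$ with $u_n\to\tau^*v$ yields $(\tau^*d_1+\gamma d_2)\Delta v+v[\tau^*(a_1-b_1\tau^*v+c_1v)+\gamma(-a_2+b_2\tau^*v-c_2v)]=0$; dividing instead by $\alpha_n$ when $\gamma=\infty$ yields $d_2\Delta v+v(-a_2+b_2\tau^*v-c_2v)=0$. Using $a_1=v^*(b_1\tau^*-c_1)$ and $a_2=v^*(b_2\tau^*-c_2)$, obtained by dividing $a_1-b_1u^*+c_1v^*=0$ and $-a_2+b_2u^*-c_2v^*=0$ by $v^*$ and recalling $u^*=\tau^*v^*$, each of these reduces to \eqref{lim} with the stated pair $(d,\xi^*)$, and elliptic regularity promotes $v$ to $C^2(\overline\Omega)$. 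For the dichotomy, observe that $\xi^*=(\gamma a_2-\tau^*a_1)/v^*$ has the sign of $\gamma-A\tau^*$. When $\gamma\in[0,A\tau^*)$ we have $\xi^*<0$, and testing \eqref{lim} with $v-v^*$ gives $-d\int_\Omega|\nabla v|^2=-\xi^*\int_\Omega v(v-v^*)^2$, whose two sides have opposite signs and must both vanish, whence $\nabla v\equiv0$ and $v\equiv v^*$; the limit being unique, the full sequence converges, proving (i). When $\gamma\in(A\tau^*,\infty]$ we have $\xi^*>0$ and $v$ is a positive solution of the scalar field equation \eqref{lim}, proving (ii).
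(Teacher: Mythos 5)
Your overall architecture is right, but there is a genuine gap exactly where you flag it: the uniform-in-$n$ bounds. Everything downstream (testing with $u_n/v_n$, the limit equation, the dichotomy) consumes a uniform $C^{1,\theta}(\overline\Omega)$ bound and uniform positivity, and your plan for producing them --- a Harnack inequality applied to the divergence-form equations $\nabla\cdot[(d_1+\alpha_n v_n)\nabla u_n-\alpha_n u_n\nabla v_n]+u_nf_1=0$ --- does not work as stated: every standard Harnack/Moser constant depends on the ellipticity ratio and the size of the coefficients, and here both degenerate, since $d_1+\alpha_n v_n$ and the drift coefficient $\alpha_n u_n$ blow up as $\alpha_n\to\infty$. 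The idea you are missing is the paper's algebraic decoupling (Theorem \ref{aprthm}): solve the pair $(d_1+\alpha v)\Delta u-\alpha u\Delta v+uf_1=0$ and $(d_2+\beta u)\Delta v-\beta v\Delta u+vf_2=0$ \emph{as a linear system for} $(\Delta u,\Delta v)$, which yields the semilinear problem \eqref{semi} with potentials \eqref{V1}--\eqref{V2}, and the crucial structural fact is the pointwise bound
$$
|V_i(u,v;\alpha,\beta)|\le\frac{|a_1-b_1u+c_1v|}{d_1}+\frac{|-a_2+b_2u-c_2v|}{d_2},
$$
\emph{independent of} $(\alpha,\beta)$, because each $\alpha$- or $\beta$-term in the numerator is dominated by the matching term in the denominator. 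Combined with the $L^2$ estimate (your first step, which matches Lemma \ref{L2lem}) this gives $\|V_i\|_2\le\overline C$ uniformly; the Lou--Ni Harnack inequality (Lemma \ref{Hlem}, which needs $p>\max\{N/2,1\}$, so taking $p=2$ forces $N\le3$ --- this, not a Sobolev embedding in a bootstrap, is where the dimension restriction enters) then applies to the unit-Laplacian equations and gives the uniform $L^\infty$ bound, after which elliptic regularity for \eqref{semi} gives the $W^{2,p}$ bound. Note also that Harnack alone cannot give your claimed lower bound $v_n\ge c_0>0$: the inequality $\max v_n\le C\min v_n$ is perfectly compatible with $v_n\to0$ uniformly. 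Excluding this requires a structural argument: if $v_n\to 0$, passing to the limit in \eqref{semi} shows the limit of $u_n$ solves the Neumann logistic equation, hence equals $a_1/b_1$; but integrating \eqref{stb} gives $\int_\Omega \widetilde v_n(-a_2+b_2u_n-c_2v_n)=0$ for all $n$ (with $\widetilde v_n=v_n/\|v_n\|_\infty$), while this quantity tends to $(-a_2+a_1b_2/b_1)\int_\Omega\widetilde v_\infty>0$ by \eqref{wc}, a contradiction. The paper runs exactly this argument, with a companion argument for $u_n$.

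Granting the a priori and positivity steps, the remainder of your proposal is correct and in places cleaner than the paper's. Testing with both $u_n/v_n$ and $v_n/u_n$ is more careful than the paper, whose identity \eqref{an} is divided by $\alpha_n$ only (silently assuming $\alpha_n\to\infty$, whereas your symmetric version also covers $\gamma<\infty$ with $\alpha_n$ bounded). Your identification $c=\tau^*$ from the two integrated equations is the paper's argument verbatim. For the limit equation, the flux-cancelling combination $\beta_n\times\eqref{sta}+\alpha_n\times\eqref{stb}$, based on $v^2\nabla(u/v)=-u^2\nabla(v/u)$, is a genuinely different and tidier route than the paper's, which instead passes to the limit in the potentials $V_1,V_2$ of \eqref{semi}; both produce \eqref{lim} with the same $(d,\xi^*)$. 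Finally, your energy identity with test function $v-v^*$ makes case (i) self-contained (the paper just cites uniqueness for the Neumann logistic equation), and your subsequence argument for full-sequence convergence is standard and correct.
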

The limit equation \eqref{lim} can be classified into 
the scalar field equation, 
and the rich structure 
of nonconstant solutions with small $d_2>0$ 
has been actively studied in the field of nonlinear
elliptic equations (e.g., \cite[Chapter 3]{Ni}).
Therefore, the assertion (ii) of Theorem \ref{limthm} 
naturally enables us to expect that
\eqref{para} has also a rich structure of
nonconstant stationary patterns when 
$\alpha$ is large enough such as
$\alpha/\beta >A\tau^{*}$ and 
$d_{2}>0$ is very small.
See Figure 1 for the regions (i) and (ii) occur on the 
$\alpha\beta$ plane.

\section{A priori estimates}
In this section, some a priori estimates of
solutions of \eqref{st} will be shown.
We begin with the $L^{2}$ estimate independently
of $d_{i}$ 
$(i=1,2)$,
$\alpha$
and 
$\beta$.
\begin{lem}\label{L2lem}
Assume \eqref{wc}.
Let $(u,v)$ be any solution of \eqref{st}.
Then it holds that
$$
\|u\|_{2}\le\dfrac{a_{1}c_{2}}
{b_{1}c_{2}-b_{2}c_{1}}
|\Omega |^{1/2}\quad\mbox{and}\quad
\|v\|_{2}\le\dfrac{a_{1}b_{2}}
{b_{1}c_{2}-b_{2}c_{1}}
|\Omega |^{1/2}.
$$
\end{lem}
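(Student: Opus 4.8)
The plan is to integrate each of \eqref{sta} and \eqref{stb} over $\Omega$ and to exploit that both the linear and the strongly coupled diffusion terms integrate to zero under the Neumann condition \eqref{stc}. Indeed, $\int_\Omega\Delta u=\int_{\partial\Omega}\partial_\nu u\,dS=0$, and since $v^2\nabla(u/v)=v\nabla u-u\nabla v$, its outward normal component on $\partial\Omega$ equals $v\,\partial_\nu u-u\,\partial_\nu v=0$, so $\int_\Omega\nabla\cdot[v^2\nabla(u/v)]\,dx=0$; the analogous identity holds for \eqref{stb}. This structural cancellation is precisely why the resulting bound is independent of $d_1$, $d_2$, $\alpha$, and $\beta$. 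Carrying this out leaves the two scalar identities
$$b_1\|u\|_2^2=a_1\int_\Omega u+c_1\int_\Omega uv,\qquad c_2\|v\|_2^2=b_2\int_\Omega uv-a_2\int_\Omega v.$$

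Next I would bound the right-hand sides using the nonnegativity \eqref{std} together with the Cauchy--Schwarz inequality, namely $\int_\Omega u\le\|u\|_2|\Omega|^{1/2}$, $\int_\Omega v\le\|v\|_2|\Omega|^{1/2}$, and $\int_\Omega uv\le\|u\|_2\|v\|_2$. From the second identity, discarding the nonnegative term $a_2\int_\Omega v$ gives $c_2\|v\|_2^2\le b_2\|u\|_2\|v\|_2$, hence $\|v\|_2\le (b_2/c_2)\|u\|_2$ (the degenerate case $\|v\|_2=0$ being trivial). Substituting this into the first identity yields
$$b_1\|u\|_2^2\le a_1\|u\|_2|\Omega|^{1/2}+c_1\frac{b_2}{c_2}\|u\|_2^2,$$
and moving the last term to the left gives $\bigl((b_1c_2-b_2c_1)/c_2\bigr)\|u\|_2^2\le a_1\|u\|_2|\Omega|^{1/2}$. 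Since the weak cooperative condition \eqref{wc} forces $b_1c_2-b_2c_1>0$, dividing by $\|u\|_2>0$ produces the stated bound on $\|u\|_2$, and feeding it back into $\|v\|_2\le(b_2/c_2)\|u\|_2$ produces the bound on $\|v\|_2$.

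The argument is elementary once the diffusion terms are eliminated, so the only genuinely delicate point is the \emph{order} of the chaining: the $v$-equation must be used first to control $\|v\|_2$ by $\|u\|_2$, where the sign $-a_2\int_\Omega v\le 0$ is essential, as it lets the awkward linear term be discarded; only afterwards can the cross term $c_1\int_\Omega uv$ in the $u$-equation be absorbed into $b_1\|u\|_2^2$, and this absorption works precisely because of the strict inequality $b_1c_2>b_2c_1$ coming from \eqref{wc}. I would also record the trivial cases $u\equiv0$ or $v\equiv0$ separately so that the divisions by $\|u\|_2$ and $\|v\|_2$ are justified. No compactness or regularity beyond $(u,v)$ solving \eqref{st} is invoked, which is exactly why the estimate is uniform in all four coefficients.
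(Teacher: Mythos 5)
Your proof is correct and follows essentially the same route as the paper: integrate both equations over $\Omega$ (the diffusion terms vanish under the Neumann conditions since $v^{2}\nabla(u/v)=v\nabla u-u\nabla v$), apply Cauchy--Schwarz, use the $v$-equation to get $c_{2}\|v\|_{2}\le b_{2}\|u\|_{2}$, and chain this into the $u$-inequality, where \eqref{wc} guarantees $b_{1}c_{2}-b_{2}c_{1}>0$ for the absorption. Your explicit handling of the degenerate cases $\|u\|_{2}=0$ or $\|v\|_{2}=0$ is a minor refinement the paper leaves implicit.
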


\begin{proof}
It is noted that \eqref{sta} is equivalent to
\begin{equation}\label{divu}
\nabla\cdot
[(d_{1}+\alpha v)\nabla u -\alpha u\nabla v]
+u(a_{1}-b_{1}u+c_{1}v)=0
\end{equation}
in which the linear and nonlinear diffusion 
terms are expressed as the divergence form.
Integrating \eqref{divu} over $\Omega$ and 
using the divergence theorem, 
one can get
$$
\int_{\Omega}u(a_{1}-b_{1}u+c_{1}v)=
\int_{\partial\Omega}
[(d_{1}+\alpha v)\nabla u -\alpha u\nabla v]
\cdot\nu\,{\rm d}\sigma
=0
$$
by the Neumann boundary conditions on $u$ and $v$.
It follows that
$b_{1}\|u\|^{2}_{2}=a_{1}\|u\|_{1}+c_{1}\int_{\Omega}uv$.
Applying the Schwarz inequality to both terms in the right-hand side, we obtain
\begin{equation}\label{Sch1}
b_{1}\|u\|_{2}\le a_{1}|\Omega |^{1/2}+c_{1}\|v\|_{2}.
\end{equation}
Hence the same procedure for \eqref{stb} gives
\begin{equation}\label{Sch2}
c_{2}\|v\|_{2}\le b_{2}\|u\|_{2}.
\end{equation}
By \eqref{Sch1} and \eqref{Sch2},
one can get the desired estimates on $\|u\|_{2}$ and $\|v\|_{2}$.
The proof of Lemma \ref{L2lem} is complete.
\end{proof}
Furthermore, in the case where the spatial dimension
$N$ satisfies $N\le 3$, we find the uniform $W^{2,p}$ 
bound 
which is independent of $\alpha$ and $\beta$ as follows:
\begin{thm}\label{aprthm}
Assume \eqref{wc} and $N\le 3$.
Then for any $p\in (1,\infty )$,
there exists a positive number $C_{p}=C_{p}(a_{i}, b_{i}, c_{i}, d_{i})$,
which is independent of $\alpha $ and $\beta$, 
such that
any solution $(u,v)$ of \eqref{st} satisfies
$$\|u\|_{W^{2,p}(\Omega )}\le C_{p}
\quad\mbox{and}\quad
\|v\|_{W^{2,p}(\Omega )}\le C_{p}.
$$
\end{thm}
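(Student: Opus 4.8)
The plan is to exploit a special algebraic feature of the attractive-transition flux: written in the right form, the principal part of \eqref{sta}--\eqref{stb} becomes a \emph{pointwise} invertible linear system for the two Laplacians, whose inverse can be controlled uniformly in $\alpha,\beta$. First I would recast the diffusion operators. A direct computation gives $v^{2}\nabla(u/v)=v\nabla u-u\nabla v$ and hence $\nabla\cdot[v^{2}\nabla(u/v)]=v\Delta u-u\Delta v$, and symmetrically $\nabla\cdot[u^{2}\nabla(v/u)]=u\Delta v-v\Delta u$. Thus, pointwise in $\Omega$, equations \eqref{sta} and \eqref{stb} read
\[
(d_{1}+\alpha v)\Delta u-\alpha u\,\Delta v=-f_{1},\qquad -\beta v\,\Delta u+(d_{2}+\beta u)\Delta v=-f_{2},
\]
with $f_{1}:=u(a_{1}-b_{1}u+c_{1}v)$ and $f_{2}:=v(-a_{2}+b_{2}u-c_{2}v)$. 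At each $x\in\Omega$ this is a $2\times2$ system $M\,(\Delta u,\Delta v)^{\top}=(-f_{1},-f_{2})^{\top}$ with
\[
M=\begin{pmatrix}d_{1}+\alpha v & -\alpha u\\[2pt] -\beta v & d_{2}+\beta u\end{pmatrix},\qquad \det M=d_{1}d_{2}+d_{1}\beta u+d_{2}\alpha v,
\]
the cross terms $\alpha\beta uv$ cancelling. By \eqref{std}, $u,v\ge0$, so $\det M\ge d_{1}d_{2}>0$ and $M$ is invertible everywhere.

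The key step is a uniform pointwise bound for $\Delta u,\Delta v$ coming from Cramer's rule. Writing $D:=\det M$, one has $\Delta u=[-(d_{2}+\beta u)f_{1}-\alpha u f_{2}]/D$ and $\Delta v=[-(d_{1}+\alpha v)f_{2}-\beta v f_{1}]/D$. Since every term of $D$ is nonnegative, the elementary lower bounds $D\ge d_{1}(d_{2}+\beta u)$, $D\ge d_{2}(d_{1}+\alpha v)$, $D\ge d_{1}\beta u$, $D\ge d_{2}\alpha v$ hold. These yield, termwise,
\[
|\Delta u|\le\frac{|f_{1}|}{d_{1}}+\frac{u\,|{-a_{2}+b_{2}u-c_{2}v}|}{d_{2}}=:P_{u}(u,v),\qquad
|\Delta v|\le\frac{|f_{2}|}{d_{2}}+\frac{v\,|a_{1}-b_{1}u+c_{1}v|}{d_{1}}=:P_{v}(u,v),
\]
where for the second term of $\Delta u$ I used $\alpha u\,|f_{2}|=\alpha uv\,|{-a_{2}+b_{2}u-c_{2}v}|$ together with $D\ge d_{2}\alpha v$ to get $\alpha u|f_{2}|/D\le(u/d_{2})|{-a_{2}+b_{2}u-c_{2}v}|$ (the inequality being trivial where $v=0$, since then $f_{2}=0$), and symmetrically for $\Delta v$ via $D\ge d_{1}\beta u$. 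The crucial point is that $P_{u},P_{v}$ are quadratic polynomials in $(u,v)$ whose coefficients depend only on $a_{i},b_{i},c_{i},d_{i}$ and \emph{not} on $\alpha,\beta$.

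Finally I would run a bootstrap. Starting from the $(\alpha,\beta)$-independent $L^{2}$ bound of Lemma \ref{L2lem}, the standard elliptic estimate $\|u\|_{W^{2,p}}\le C_{p}(\|\Delta u\|_{p}+\|u\|_{p})$ for the Neumann Laplacian (with $C_{p}$ depending only on $\Omega$ and $p$) combined with $\|\Delta u\|_{p}\le\|P_{u}(u,v)\|_{p}$ and the Sobolev embeddings turns integrability of $(u,v)$ into higher integrability: if $u,v\in L^{q}$ then $P_{u},P_{v}\in L^{q/2}$, whence $u,v\in W^{2,q/2}$ and the Sobolev embedding raises the exponent. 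Because the reaction is quadratic and $N\le3$, this nonlinearity is subcritical, so finitely many iterations produce an $(\alpha,\beta)$-independent $L^{\infty}$ bound on $(u,v)$. Feeding this back into $P_{u},P_{v}$ bounds $\|\Delta u\|_{p},\|\Delta v\|_{p}$ for every $p\in(1,\infty)$, and one further application of the elliptic estimate gives the asserted uniform $W^{2,p}$ bounds.

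The main obstacle is the $(\alpha,\beta)$-independence in the second step. Everything hinges on the cancellation $\det M=d_{1}d_{2}+d_{1}\beta u+d_{2}\alpha v$ and on the fact that each large factor $\alpha v$, $\beta u$ occurring in the numerators is dominated by a matching term of $D$; this is a genuine structural property of the attractive-transition flux and would fail for a generic strongly coupled diffusion. Once the quadratic pointwise bounds $|\Delta u|\le P_{u}$, $|\Delta v|\le P_{v}$ are secured, the subcritical bootstrap under the hypothesis $N\le3$ is routine.
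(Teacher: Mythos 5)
Your algebraic core is exactly the paper's own reduction: inverting the diffusion matrix $M$ pointwise, with the cancellation $\det M=d_{1}d_{2}+d_{1}\beta u+d_{2}\alpha v$, is precisely how the paper converts \eqref{st} into the semilinear system \eqref{semi}, and your pointwise bounds $P_{u},P_{v}$ are nothing but $u$ and $v$ times the paper's $(\alpha,\beta)$-independent bounds \eqref{V11}--\eqref{V22} on the potentials $V_{1},V_{2}$. Where you part ways with the paper is the passage from the $L^{2}$ bound of Lemma \ref{L2lem} to an $L^{\infty}$ bound: the paper applies the Lou--Ni Harnack inequality (Lemma \ref{Hlem}, with $p=2>N/2$, which is where $N\le 3$ enters), while you propose a Sobolev bootstrap.

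That bootstrap, as written, fails at its very first iteration. Lemma \ref{L2lem} gives only a uniform $L^{2}$ bound, so the quadratic quantities $P_{u},P_{v}$ are a priori bounded only in $L^{1}$, and your induction step ``$u,v\in L^{q}\Rightarrow P_{u},P_{v}\in L^{q/2}\Rightarrow u,v\in W^{2,q/2}$'' must be launched at $q/2=1$. But the Calder\'on--Zygmund estimate $\|u\|_{W^{2,p}}\le C_{p}(\|\Delta u\|_{p}+\|u\|_{p})$ that you invoke is false at $p=1$, and no uniform $L^{2+\varepsilon}$ bound is available to start above this threshold; the phrase ``finitely many iterations produce an $L^{\infty}$ bound'' hides exactly the step that breaks. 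The gap is repairable, but it needs an additional idea. One option is an energy estimate: testing $\Delta u+V_{1}u=0$ with $u$ gives $\|\nabla u\|_{2}^{2}=\int_{\Omega}V_{1}u^{2}\le\|V_{1}\|_{2}\|u\|_{4}^{2}$, and Gagliardo--Nirenberg ($\|u\|_{4}\le C\|u\|_{H^{1}}^{N/4}\|u\|_{2}^{1-N/4}$, admissible since $N\le 3$) plus Young's inequality yield a uniform $H^{1}$ bound; then $u,v\in L^{6}$, so $P_{u},P_{v}\in L^{3}$, so $u,v\in W^{2,3}\hookrightarrow L^{\infty}$, and your final step closes the argument. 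The other option is the paper's route: apply Lemma \ref{Hlem} to \eqref{semi} with $p=2$ to get $\max_{\overline{\Omega}}u\le C_{\sharp}\min_{\overline{\Omega}}u$, and combine with $\min_{\overline{\Omega}}u\le|\Omega|^{-1/2}\|u\|_{2}$ from Lemma \ref{L2lem} to obtain the uniform $L^{\infty}$ bound in one stroke. With either repair inserted, the rest of your proposal is correct.
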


In the proof of Theorem \ref{aprthm},
the following Harnack inequality
will play an important role.

\begin{lem}[\cite{LN2}]\label{Hlem}
Let $w$ be a nonnegative solution of
$$\Delta w+V(x)w=0\quad\mbox{in}\ \Omega,\qquad
\partial_{\nu}w=0\quad\mbox{on}\ \partial\Omega.$$
Then for any $p>\max\{N/2, 1\}$,
there exists $C_{\sharp}=C_{\sharp}(\|V\|_{p})>0$
such that
$$
\max_{x\in\overline{\Omega}}w(x)\le C_{\sharp}\min_{x\in\overline{\Omega}}
w(x).
$$
\end{lem}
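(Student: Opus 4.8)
Although this Harnack inequality is quoted from \cite{LN2}, the route I would take to establish it is a global (up to the boundary) Harnack estimate obtained by Moser iteration, the crucial point being that the constant $C_\sharp$ depends on $\|V\|_p$ alone, which is exactly what makes the lemma useful in the proof of Theorem \ref{aprthm}. First I would recast the equation as $\Delta w=-Vw$ and treat $Vw$ as a lower-order term. The exponent $p>\max\{N/2,1\}$ is precisely what keeps the iteration subcritical: testing a Caccioppoli inequality against $\zeta^2 w^{\beta}$ produces a term $\int_\Omega|V|\,\zeta^2 w^{\beta+1}$, which H\"older bounds by $\|V\|_p$ times an $L^{p'}$ norm of $\zeta^2 w^{\beta+1}$, and since $p>N/2$ forces the conjugate exponent to satisfy $2p'<2N/(N-2)$ (the critical Sobolev exponent, interpreted as $+\infty$ when $N\le 2$), this term is absorbed into the Dirichlet energy by Sobolev embedding with a constant depending only on $\|V\|_p$, $N$, $p$, and $|\Omega|$.

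Second, I would run the iteration in interior and boundary versions. On any ball $B_{2r}(x_0)\subset\Omega$ the Moser scheme gives local boundedness $\sup_{B_r}w\le C\bigl(|B_{2r}|^{-1}\int_{B_{2r}}w^{q_0}\bigr)^{1/q_0}$ together with the weak Harnack lower bound $\bigl(|B_{2r}|^{-1}\int_{B_{2r}}w^{q_0}\bigr)^{1/q_0}\le C\inf_{B_r}w$ for some $q_0>0$, whose combination is the local Harnack inequality $\sup_{B_r}w\le C\inf_{B_r}w$. For a ball centered at a boundary point I would flatten $\partial\Omega$ locally, using its smoothness, and even-reflect $w$; because $\partial_\nu w=0$, the reflection is a weak solution of the same type with a reflected potential of comparable $L^p$ norm, reducing the boundary case to the interior one. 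In every instance the constant $C$ depends only on $\|V\|_p$, $N$, $p$, and the fixed geometry of $\Omega$.

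Finally I would globalize by compactness and chaining. Elliptic regularity gives $w\in W^{2,p}(\Omega)\hookrightarrow C(\overline{\Omega})$ (again because $p>N/2$), so $w$ attains its maximum and minimum on $\overline{\Omega}$. Covering the compact connected set $\overline{\Omega}$ by finitely many interior and boundary balls and chaining the local Harnack estimates along a connecting chain, I would conclude that the values of $w$ at any two points of $\overline{\Omega}$ are comparable, whence $\max_{\overline{\Omega}}w\le C_\sharp\min_{\overline{\Omega}}w$ with $C_\sharp$ depending on the number of balls and the local constants, and therefore ultimately only on $\|V\|_p$ and the fixed data $N,p,\Omega$.

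The main obstacle is the low integrability of $V$: because $V$ is merely in $L^p$ and not bounded, the classical Harnack inequality for operators with $L^\infty$ coefficients cannot be invoked directly, and the whole Moser iteration must be redone with the subcritical bookkeeping $2p'<2N/(N-2)$ kept explicit so that \emph{every} constant is governed by $\|V\|_p$ alone. The Neumann reflection near $\partial\Omega$ is routine given the smooth boundary, but it must be arranged so that the reflected potential stays in $L^p$ with comparable norm; otherwise the boundary contributions would spoil precisely the uniform dependence on $\|V\|_p$ that is the entire purpose of the lemma.
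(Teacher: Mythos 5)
The paper gives no proof of this lemma: it is quoted from Lou and Ni \cite{LN2}, where it is established by essentially the route you describe --- Moser iteration with the subcritical bookkeeping $2p'<2N/(N-2)$ forced by $p>\max\{N/2,1\}$, boundary flattening with even reflection to handle the Neumann condition, and a finite covering/chaining argument over the compact connected set $\overline{\Omega}$ --- so your reconstruction matches the source, and the dependence of every constant only on $\|V\|_{p}$ (and the fixed data $N$, $p$, $\Omega$) is tracked correctly. Two small points to tidy: the inequality is trivial when $w\equiv 0$ and otherwise you should invoke the strong maximum principle (or the Harnack inequality itself) to get $w>0$ before dividing values along the chain; and the weak Harnack lower bound $\bigl(|B_{2r}|^{-1}\int_{B_{2r}}w^{q_{0}}\bigr)^{1/q_{0}}\le C\inf_{B_{r}}w$ with a merely $L^{p}$ potential, which you assert rather than prove, is the technical heart of the argument, but it is indeed covered by the standard theory for operators with lower-order coefficients in $L^{p}$, $p>N/2$ (e.g.\ \cite{GT}, Theorem 8.18), with constants controlled by $\|V\|_{p}$ exactly as you claim.
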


\begin{proof}[Proof of Theorem \ref{aprthm}]
Since $\nabla\cdot (v\nabla u-u\nabla v)=v\Delta u-u\Delta v$,
then \eqref{divu}
(or \eqref{sta})
is reduced to
\begin{equation}\label{st1}
(d_{1}+\alpha v)\Delta u-\alpha u\Delta v+u(a_{1}-b_{1}u+c_{1}v)=0
\quad\mbox{in}\ \Omega.
\end{equation}
Similarly one can form \eqref{stb} derive
\begin{equation}\label{st2}
(d_{2}+\beta u)\Delta v-\beta v\Delta u+v(-a_{2}+b_{2}u-c_{2}v)=0
\quad\mbox{in}\ \Omega.
\end{equation}
Substituting $\Delta v$ in \eqref{st2} for $\Delta v$  in \eqref{st1}, 
and substituting $\Delta u$ in \eqref{st1} for $\Delta u$ in \eqref{st2},
we obtain the following semilinear system which is equivalent to
\eqref{st}:
\begin{equation}\label{semi}
\begin{cases}
\Delta u + V_{1}(u,v; \alpha, \beta )u=0,\quad u\ge 0
\quad&\mbox{in}\ \Omega,\\
\Delta v+ V_{2}(u,v; \alpha, \beta )v=0,\quad v\ge 0
\quad&\mbox{in}\ \Omega,\\
\partial_{\nu}u=\partial_{\nu }v=0\quad&\mbox{on}\ \partial\Omega,
\end{cases}
\end{equation}
where
\begin{equation}\label{V1}
V_{1}(u,v; \alpha, \beta ):=
\dfrac{(d_{2}+\beta u)(a_{1}-b_{1}u+c_{1}v)
+\alpha v (-a_{2}+b_{2}u-c_{2}v)}
{d_{1}d_{2}+d_{1}\beta u+d_{2}\alpha v}
\end{equation}
and
\begin{equation}\label{V2}
V_{2}(u,v; \alpha, \beta ):=
\dfrac{(d_{1}+\alpha v)(-a_{2}+b_{2}u-c_{2}v)
+\beta u (a_{1}-b_{1}u+c_{1}v)}
{d_{1}d_{2}+d_{1}\beta u+d_{2}\alpha v}.
\end{equation}
Our strategy of the proof is first 
to find the uniform bound of 
$\|V_{i}(u,v;\alpha, \beta)\|_{2}$
independently of $(\alpha, \beta )$
with the aid of Lemma \ref{L2lem} 
and next to apply Lemma \ref{Hlem} to \eqref{semi}.
It follows from \eqref{V1} that
\begin{equation}\label{V11}
\begin{split}
|V_{1}(u,v;\alpha, \beta )|&\le
\left|
\dfrac{(d_{2}+\beta u)(a_{1}-b_{1}u+c_{1}v)}
{d_{1}d_{2}+d_{1}\beta u+d_{2}\alpha v}\right|+
\left|
\dfrac{\alpha v (-a_{2}+b_{2}u-c_{2}v)}
{d_{1}d_{2}+d_{1}\beta u+d_{2}\alpha v}\right|\\
&\le
\dfrac{|a_{1}-b_{1}u+c_{1}v|}
{d_{1}}+
\dfrac{|-a_{2}+b_{2}u-c_{2}v|}{d_{2}}.
\end{split}
\end{equation}
Similarly, we know from \eqref{V2} that
\begin{equation}\label{V22}
|V_{2}(u,v;\alpha, \beta )|\le
\dfrac{|-a_{2}+b_{2}u-c_{2}v|}{d_{2}}+
\dfrac{|a_{1}-b_{1}u+c_{1}v|}{d_{1}}.
\end{equation}
Hence \eqref{V11} and \eqref{V22} with Lemma \ref{L2lem}
ensure a positive constant
$\overline{C}=\overline{C}(a_{i}, b_{i}, c_{i}, d_{i})$
independent of $(\alpha, \beta )$ such that
$\|V_{i}(u,v; \alpha, \beta )\|_{2}\le\overline{C}$
for $i=1$,
$2$.
Therefore, if $N\le 3$, then the application of Lemma \ref{Hlem} 
for \eqref{semi} with $p=2$ ensures some $C_{\sharp}=
C_{\sharp}(a_{i}, b_{i}, c_{i}, d_{i})$ such that
$$
\max_{x\in\overline{\Omega}}u(x)\le
C_{\sharp}\min_{x\in\overline{\Omega}}u(x),
\qquad
\max_{x\in\overline{\Omega}}v(x)\le
C_{\sharp}\min_{x\in\overline{\Omega}}v(x).
$$
Since
$\min_{x\in\overline{\Omega}}u(x)\le a_{1}c_{2}/
(b_{1}c_{2}-b_{2}c_{1})$ 
and
$\min_{x\in\overline{\Omega}}v(x)\le a_{1}b_{2}/
(b_{1}c_{2}-b_{2}c_{1})$ by Lemma \ref{L2lem},
then we obtain the uniform $L^{\infty}$ bound,
which is independent of $\alpha$ and $\beta$,
as follows:
\begin{equation}\label{Linf}
\max_{x\in\overline{\Omega }}u(x)\le
M\quad\mbox{and}\quad
\max_{x\in\overline{\Omega }}v(x)\le M
\quad\mbox{where}\quad
M:=C_{\sharp}\dfrac{a_{1}\max\{ b_{2}, c_{2}\}}
{b_{1}c_{2}-b_{2}c_{1}}.
\end{equation}
By virtue of \eqref{semi},
the standard application of the elliptic regularity theory
using
\eqref{V11}-\eqref{Linf} ensures
the desired bound $C_{p}$ in the assertion.
The proof of Theorem \ref{aprthm} is complete.
\end{proof}

\begin{thm}\label{nonexthm}
Let $M$ be the positive constant obtained in \eqref{Linf}.
If \eqref{st} admits at least one positive nonconstant solution,
then
$$
d_{1}<\dfrac{(\alpha +\beta )M}{2}+
\dfrac{1}{\lambda_{1}}
\biggl(
a_{1}+
\dfrac{(3c_{1}+b_{2})M}{2}
\biggr)
$$
or
$$
d_{2}<\dfrac{M}{2}\biggl(
\alpha +\beta +
\dfrac{c_{1}+3b_{2}}{\lambda_{1}}\biggr),
$$
\end{thm}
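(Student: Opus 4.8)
The plan is to prove the contrapositive: I will show that if both $d_1$ and $d_2$ are at least as large as the two quantities on the right-hand sides of the stated inequalities, then every solution of \eqref{st} must be constant, contradicting the existence of a positive nonconstant solution. The engine is an energy identity obtained by testing the two equations against the mean-zero parts of $u$ and $v$. Throughout, write $\bar u:=|\Omega|^{-1}\int_\Omega u$ and $\bar v:=|\Omega|^{-1}\int_\Omega v$.

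First I would use the divergence form \eqref{divu} of the $u$-equation (and its obvious analogue for $v$), test them against $u-\bar u$ and $v-\bar v$ respectively, integrate by parts (the Neumann condition removes the boundary terms and $\nabla(u-\bar u)=\nabla u$), and add the results to obtain
\begin{align*}
&\int_\Omega (d_1+\alpha v)|\nabla u|^2 + \int_\Omega(d_2+\beta u)|\nabla v|^2 - \int_\Omega(\alpha u+\beta v)\,\nabla u\cdot\nabla v\\
&\qquad = \int_\Omega(u-\bar u)\,u(a_1-b_1 u+c_1 v) + \int_\Omega(v-\bar v)\,v(-a_2+b_2 u-c_2 v).
\end{align*}
On the left I would discard the nonnegative self-diffusion terms $\alpha v|\nabla u|^2,\ \beta u|\nabla v|^2\ge 0$ and estimate the mixed term through $|\nabla u\cdot\nabla v|\le\tfrac12(|\nabla u|^2+|\nabla v|^2)$ together with the uniform bound $u,v\le M$ from \eqref{Linf}; this leaves the lower bound $(d_1-\tfrac{(\alpha+\beta)M}{2})\|\nabla u\|_2^2+(d_2-\tfrac{(\alpha+\beta)M}{2})\|\nabla v\|_2^2$ for the left-hand side.

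The heart of the argument is the bookkeeping on the right. Using $\int_\Omega(u-\bar u)=0$, the quadratic pieces give $a_1\|u-\bar u\|_2^2$ and $-a_2\|v-\bar v\|_2^2$, while the cubic pieces reorganize into the sign-definite quantities $-b_1\int_\Omega(u-\bar u)^2(u+\bar u)$ and $-c_2\int_\Omega(v-\bar v)^2(v+\bar v)$, which I would retain rather than discard. For the cross terms $c_1\int_\Omega(u-\bar u)uv$ and $b_2\int_\Omega(v-\bar v)uv$ I would peel off a $\bar v\|u-\bar u\|_2^2$ (resp. $\bar u\|v-\bar v\|_2^2$) contribution and bound the genuinely mixed remainder by $\tfrac{(c_1+b_2)M}{2}(\|u-\bar u\|_2^2+\|v-\bar v\|_2^2)$ via Young's inequality and $u,v,\bar u,\bar v\le M$. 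Collecting coefficients yields exactly $a_1+\tfrac{(3c_1+b_2)M}{2}$ in front of $\|u-\bar u\|_2^2$ and $\tfrac{(c_1+3b_2)M}{2}$ in front of $\|v-\bar v\|_2^2$, the favorable term $-a_2\|v-\bar v\|_2^2$ being simply dropped.

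Finally I would apply the Poincaré–Wirtinger inequality $\|u-\bar u\|_2^2\le\lambda_1^{-1}\|\nabla u\|_2^2$ (and likewise for $v$) to the right-hand side and combine it with the lower bound for the left, arriving at
\begin{align*}
&\Bigl(d_1 - \tfrac{(\alpha+\beta)M}{2} - \tfrac1{\lambda_1}\bigl(a_1+\tfrac{(3c_1+b_2)M}{2}\bigr)\Bigr)\|\nabla u\|_2^2 + \Bigl(d_2 - \tfrac{(\alpha+\beta)M}{2} - \tfrac{(c_1+3b_2)M}{2\lambda_1}\Bigr)\|\nabla v\|_2^2\\
&\qquad + b_1\int_\Omega(u-\bar u)^2(u+\bar u) + c_2\int_\Omega(v-\bar v)^2(v+\bar v) \le 0.
\end{align*}
If both $d_1$ and $d_2$ met (with $\ge$) the two bounds of the statement, all four terms on the left would be nonnegative, forcing $b_1\int_\Omega(u-\bar u)^2(u+\bar u)=c_2\int_\Omega(v-\bar v)^2(v+\bar v)=0$; since $u,v>0$ this gives $u\equiv\bar u$ and $v\equiv\bar v$, a constant solution, contradicting nonconstancy. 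The delicate step is the middle one: I must organize the reaction integrals so that the two good cubic terms survive with the correct sign and the cross terms produce precisely the constants $3c_1+b_2$ and $c_1+3b_2$. Keeping those cubic terms is also what makes the boundary case—equality in the $d_i$ bounds—go through without needing any extra strictness in the estimates.
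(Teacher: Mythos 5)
Your proof is correct and takes essentially the same route as the paper's: testing the divergence forms against $u-\overline{u}$ and $v-\overline{v}$, reorganizing the reaction integrals into the same constants $a_{1}+\tfrac{(3c_{1}+b_{2})M}{2}$ and $\tfrac{(c_{1}+3b_{2})M}{2}$, absorbing the mixed gradient term via Young's inequality with the $L^{\infty}$ bound $M$, and finishing with the Poincar\'e-Wirtinger inequality. The only difference is logical packaging: the paper argues directly and discards the cubic terms $-b_{1}\int_{\Omega}(u-\overline{u})^{2}(u+\overline{u})$ and $-c_{2}\int_{\Omega}(v-\overline{v})^{2}(v+\overline{v})$ to get a strict final inequality, whereas you retain them and run the contrapositive, which handles the borderline case of equality in the $d_{i}$ bounds slightly more cleanly than the paper's strict-inequality bookkeeping.
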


\begin{proof}
Suppose that $(u,v)$ is a positive nonconstant solution of \eqref{st}.
We set $\overline{u}:=|\Omega|^{-1}\int_{\Omega}u$ and 
$\overline{v}:=|\Omega|^{-1}\int_{\Omega }v$.
Taking the inner product of \eqref{divu} with $u-\overline{u}$,
we obtain
\begin{equation}\label{inner}
\int_{\Omega}
(d_{1}+\alpha v)|\nabla u|^{2}
-\alpha\int_{\Omega}u\nabla u\cdot\nabla v
=\int_{\Omega }(u-\overline{u})
u(a_{1}-b_{1}u-c_{1}v).
\end{equation}
Here we observe that
\begin{equation}\label{r1}
a_{1}\int_{\Omega }(u-\overline{u})u=
a_{1}\int_{\Omega }(u-\overline{u})(u-\overline{u}+\overline{u})=
a_{1}\|u-\overline{u}\|^{2}_{2}
\end{equation}
and
\begin{equation}\label{r2}
-b_{1}\int_{\Omega }(u-\overline{u})u^{2}=
-b_{1}\int_{\Omega }(u-\overline{u})
(u^{2}-\overline{u}^{2}+\overline{u}^{2})=
-b_{1}\int_{\Omega}(u-\overline{u})^{2}(u+\overline{u})
\end{equation}
and
\begin{equation}\label{r3}
\begin{split}
c_{1}\int_{\Omega}
(u-\overline{u})uv&=
c_{1}\int_{\Omega}(u-\overline{u})(u-\overline{u}+\overline{u})
v\\
&=
c_{1}\int_{\Omega}
(u-\overline{u})^{2}v+
c_{1}\overline{u}\int_{\Omega}
(u-\overline{u})v\\
&=
c_{1}\int_{\Omega}
(u-\overline{u})^{2}v+
c_{1}\overline{u}\int_{\Omega}
(u-\overline{u})(v-\overline{v}).
\end{split}
\end{equation}
Substituting \eqref{r1}-\eqref{r3} into \eqref{inner}, 
one can see that
$$
%\begin{split}
%&
d_{1}\|\nabla u\|^{2}
%\int_{\Omega}
%(d_{1}+\alpha v)|\nabla u|^{2}
-\alpha\int_{\Omega}u\nabla u\cdot\nabla v\\
%=
<
%&
a_{1}\|u-\overline{u}\|^{2}_{2}
%-
%b_{1}\int_{\Omega}(u-\overline{u})^{2}(u+\overline{u})
+
c_{1}\int_{\Omega}
(u-\overline{u})^{2}v+
c_{1}\overline{u}\int_{\Omega}
(u-\overline{u})(v-\overline{v}).
%\end{split}
$$
Therefore,
together with \eqref{Linf},
the Schwarz and the Young inequalities yield
\begin{equation}\label{add1}
\biggl(d_{1}-\dfrac{\alpha M}{2}\biggr)\|\nabla u\|^{2}_{2}-
\dfrac{\alpha M}{2}\|\nabla v\|^{2}_{2}<
\biggl(a_{1}+\dfrac{3c_{1}M}{2}\biggr)\|u-\overline{u}\|^{2}_{2}
+\dfrac{c_{1}M}{2}\|v-\overline{v}\|^{2}_{2}.
\end{equation}
Obviously, the same procedure for \eqref{stb} yields
\begin{equation}\label{add2}
\biggl(d_{2}-\dfrac{\beta M}{2}\biggr)\|\nabla v\|^{2}_{2}-
\dfrac{\beta M}{2}\|\nabla u\|^{2}_{2}<
\dfrac{b_{2}M}{2}\|u-\overline{u}\|^{2}_{2}+
\dfrac{3b_{2}M}{2}\|v-\overline{v}\|^{2}_{2}.
\end{equation}
Adding \eqref{add1} and \eqref{add2}, one can see 
\begin{equation}
\begin{split}
&\biggl(
d_{1}-\dfrac{(\alpha +\beta )M}{2}\biggr)\|\nabla u\|^{2}_{2}+
\biggl(
d_{2}-\dfrac{(\alpha +\beta )M}{2}\biggr)\|\nabla v\|^{2}_{2}\\
<&
\biggl(a_{1}+\dfrac{(3c_{1}+b_{2})M}{2}\biggr)\|u-\overline{u}\|^{2}_{2}+
+\dfrac{(c_{1}+3b_{2})M}{2}\|v-\overline{v}\|^{2}_{2}.
\end{split}
\nonumber
\end{equation}
Together with the Poincar\'e-Wirtinger equality, we obtain
\begin{equation}
\begin{split}
&\biggl\{\,
d_{1}-\dfrac{(\alpha+\beta )M}{2}-\dfrac{1}{\lambda_{1}}
\biggl( a_{1}+\dfrac{(3c_{1}+b_{2})M}{2}\biggr)\,\biggr\}
\|\nabla u\|^{2}_{2}\\
&+
\biggl\{\,
d_{2}-\dfrac{(\alpha +\beta )M}{2}-\dfrac{(c_{1}+3b_{2})M}{2\lambda_{1}}
\,\biggr\}\|\nabla v\|^{2}_{2}< 0.
\end{split}
\nonumber
\end{equation}
Consequently, we obtain the assertion of Theorem \ref{nonexthm}.
\end{proof}

\section{Bifurcation from the constant state}
In this section, we give the proof of Theorem \ref{bifthm}.
The proof relies on the bifurcation theorem from simple
eigenvalues established by Crandall and Rabinowitz
\cite[Theorem 1.7]{CR} and its improvement by
Shi and Wang \cite[Theorem 4.3]{SW} for the
application to a class of quasilinear systems.
We begin with the linearized stability of $(u^{*},v^{*})$
when $d_{2}>0$ is sufficiently large.
\begin{lem}\label{linlem}
Assume \eqref{wc}.
Then for any $(\alpha,\beta)\in\mathbb{R}_{+}^{2}$,
there exists $\widehat{d}_{*}(\alpha, \beta )\ge 0$ such that
if $d_{2}>\widehat{d}_{*}(\alpha, \beta )$, then
$(u^{*}, v^{*})$ is linearly stable.
\end{lem}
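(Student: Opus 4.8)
The plan is to compute the linearization of \eqref{para} about the constant state $(u^*,v^*)$, decompose it along the eigenfunctions $\varPhi_j$ of the Neumann Laplacian, and reduce the stability question to a sequence of $2\times2$ matrix conditions. Writing \eqref{sta}--\eqref{stb} in the divergence form of \eqref{divu} and setting $u=u^*+\phi$, $v=v^*+\psi$, the fact that $\nabla u^*=\nabla v^*=0$ makes the quasilinear terms linearize cleanly: the products $\alpha\psi\nabla\phi$, $\beta\phi\nabla\psi$ and the like are quadratic and drop out. The resulting linearized operator has constant coefficients in $x$ and commutes with $-\Delta$, so it is normally elliptic (its principal symbol has determinant $d_1d_2+d_1\beta u^*+d_2\alpha v^*>0$ and positive trace) and its spectrum is the union over $j\ge0$ of the spectra of the matrices
\[
J_j=\begin{pmatrix}
-(d_1+\alpha v^*)\lambda_j-b_1u^* & (\alpha\lambda_j+c_1)u^*\\[2pt]
(\beta\lambda_j+b_2)v^* & -(d_2+\beta u^*)\lambda_j-c_2v^*
\end{pmatrix}.
\]
Thus $(u^*,v^*)$ is linearly stable precisely when $\operatorname{tr}J_j<0$ and $\det J_j>0$ for every $j\ge0$.

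First I would dispose of the trace: every diagonal entry of $J_j$ is negative for all $\alpha,\beta\ge0$ and all $d_2>0$, so $\operatorname{tr}J_j<0$ holds automatically and only the determinant condition is at stake. For the zeroth mode $\lambda_0=0$ the matrix $J_0$ is exactly the Jacobian of the kinetic system, and $\det J_0=u^*v^*(b_1c_2-b_2c_1)>0$ by \eqref{wc}; this recovers the ODE stability quoted after \eqref{wc}. For $j\ge1$ I would exploit that $\det J_j$ is affine in $d_2$ with strictly positive slope $[(d_1+\alpha v^*)\lambda_j+b_1u^*]\lambda_j$, hence strictly increasing in $d_2$. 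Using the identities $a_1=b_1u^*-c_1v^*$ and $a_2=b_2u^*-c_2v^*$ that define $(u^*,v^*)$, a direct computation identifies its unique root with $d_*^{(j)}(\alpha,\beta)$ of \eqref{djdef}; consequently $\det J_j>0$ if and only if $d_2>d_*^{(j)}(\alpha,\beta)$. This also explains structurally why $d_*^{(j)}$ is the bifurcation value of Theorem \ref{bifthm}: it is exactly where the $j$-th mode of the linearization becomes singular.

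It then remains to select a single threshold valid for all modes simultaneously, and this uniformity is the main obstacle, since there are infinitely many $j$. I would set
\[
\widehat{d}_*(\alpha,\beta):=\max\Bigl\{0,\ \sup_{j\ge1}d_*^{(j)}(\alpha,\beta)\Bigr\}
\]
and show the supremum is finite by viewing $d_*^{(j)}=D(\lambda_j)$ as the value of the rational function $D(\lambda)$ read off from \eqref{djdef}. Its numerator is quadratic in $\lambda$ with leading coefficient $-d_1u^*\beta$ and its denominator has leading coefficient $d_1+\alpha v^*$, so $D(\lambda)\to -d_1u^*\beta/(d_1+\alpha v^*)<0$ as $\lambda\to\infty$. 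Because $\lambda_j\to\infty$, the sequence $\{d_*^{(j)}\}_{j\ge1}$ is bounded and only finitely many of its terms can be positive, whence the supremum is attained and finite. For $d_2>\widehat{d}_*(\alpha,\beta)$ we then have $\det J_j>0$ for every $j\ge1$ (and $\det J_0>0$ unconditionally) together with $\operatorname{tr}J_j<0$, which yields the claimed linear stability.
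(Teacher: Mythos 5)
Your proposal is correct and follows essentially the same route as the paper: linearize about $(u^{*},v^{*})$, expand in the Neumann eigenfunctions $\varPhi_{j}$, observe that the trace condition is automatic so that stability reduces to the sign of a determinant that is affine and increasing in $d_{2}$ with root $d_{*}^{(j)}(\alpha,\beta)$, and take the maximum over the (finitely many) modes with positive threshold; your $J_{j}$ is just $-A_{j}(d_{2};\alpha,\beta)$ from the paper. In fact, your asymptotic argument $D(\lambda)\to -d_{1}u^{*}\beta/(d_{1}+\alpha v^{*})<0$ supplies the detail behind the paper's unproved assertion that $\sharp J(\alpha,\beta)<\infty$.
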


\begin{proof}
In view of the left-hand sides of \eqref{st1} and \eqref{st2},
for any $(d_{2}, u,v)\in \mathbb{R}_{+}\times\boldsymbol{X}_{p}$ 
with $p>N$,
we define $F(d_{2}, u,v)\in\boldsymbol{Y}_{p}$ by
\begin{equation}\label{Fdef}
F(d_{2}, u, v):=
\left[
\begin{array}{ll}
d_{1}+\alpha v & -\alpha u\\
-\beta v & d_{2}+\beta u
\end{array}
\right]
\left[
\begin{array}{c}
\Delta u\\
\Delta v
\end{array}
\right]
+
\left[
\begin{array}{l}
u(a_{1}-b_{1}u+c_{1}v)\\
v(-a_{2}+b_{2}u-c_{2}v)
\end{array}
\right].
\end{equation}
In order to find bifurcation points on $\varGamma_{0}$,
we consider the linearized eigenvalue problem around
the positive constant solution $(u^{*}, v^{*})$ to
\eqref{st} as follows:
\begin{equation}\label{ei}
L(d_{2})\left[
\begin{array}{c}
\phi\\
\psi
\end{array}
\right]
+\mu
\left[
\begin{array}{c}
\phi\\
\psi
\end{array}
\right]=
\left[
\begin{array}{c}
0\\
0
\end{array}
\right],
\end{equation}
where $L(d_{2})=D_{(u,v)}F(d_{2},u^{*},v^{*})$.
By straightforward calculation, one can see that
the eigenvalue problem \eqref{ei} is reduced to
the following system of linear elliptic equations:
\begin{equation}\label{ei2}
\left[
\begin{array}{ll}
d_{1}+\alpha v^{*} & -\alpha u^{*}\\
-\beta v^{*} & d_{2}+\beta u^{*}
\end{array}
\right]
\left[
\begin{array}{c}
\Delta\phi\\
\Delta \psi
\end{array}
\right]
+
\left[
\begin{array}{ll}
\mu-b_{1}u^{*} & c_{1}u^{*}\\
b_{2}v^{*} & \mu -c_{2}v^{*}
\end{array}
\right]
\left[
\begin{array}{c}
\phi\\
\psi
\end{array}
\right]
=
\left[
\begin{array}{c}
0\\
0
\end{array}
\right].
\end{equation}
We seek for solution $(\phi, \psi)$ of \eqref{ei2}
in the following form of the Fourier expansion:
$$
(\phi, \psi )=\sum^{\infty}_{j=0}
(h_{j}, k_{j})\varPhi_{j}.
$$
By the fact that
$\{\varPhi_{j}\}$ is a complete orthogonal system in $L^{2}(\Omega )$,
we substitute the above expression into \eqref{ei2} to get
\begin{equation}\label{alg}
\left(\mu I -A_{j}(d_{2}; \alpha, \beta)\right)
\left[
\begin{array}{c}
h_{j}\\
k_{j}
\end{array}
\right]
=
\left[
\begin{array}{c}
0\\
0
\end{array}
\right],
%\quad\mbox{for each}\ j\in\mathbb{N}_{0},
\end{equation}
where
$$
A_{j}(d_{2}; \alpha, \beta ):=
\left[
\begin{array}{ll}
b_{1}u^{*}+(d_{1}+\alpha v^{*})\lambda_{j} &
-(c_{1}+\lambda_{j}\alpha ) u^{*}\\
-(b_{2}+\lambda_{j}\beta ) v^{*} &
c_{2}v^{*}+(d_{2}+\beta u^{*})\lambda_{j}
\end{array}
\right].
$$
It is noted that
$\mu$ is an eigenvalue of \eqref{ei} if and only if
\eqref{alg}
admits a nontrivial solution $(h_{j}, k_{j})$ with 
some $j\in\mathbb{N}\cup\{0\}$, that is,
\begin{equation}\label{charaeq}
\det\left(\mu I-A_{j}(d_{2}; \alpha, \beta)\right)\\
=\mu^{2}-\mbox{tr}\,A_{j}(d_{2}; \alpha, \beta)\mu
+\det A_{j}(d_{2}; \alpha, \beta)=0.
\end{equation}
We denote  roots of
the algebraic equation $\det\left(\mu I-A_{j}(d_{2}; \alpha, \beta)\right)=0$
by
$\mu^{\pm}_{j}(d_{2}; \alpha, \beta )$ with
$$
\mbox{Re}\,\mu^{-}_{j}(d_{2}; \alpha, \beta )\le
\mbox{Re}\,\mu^{+}_{j}(d_{2}; \alpha, \beta )
\quad\mbox{and}\quad
\mbox{Im}\,\mu^{-}_{j}(d_{2}; \alpha, \beta )\le
\mbox{Im}\,\mu^{+}_{j}(d_{2}; \alpha, \beta ).
$$
Since
$$
\mbox{tr}\,A_{j}(d_{2}; \alpha, \beta )=
b_{1}u^{*}+c_{2}v^{*}+\lambda_{j}
(d_{1}+d_{2}+\alpha v^{*}+\beta u^{*})>0,
$$
then
\begin{equation}\label{mujsign}
\begin{cases}
\mu^{-}_{j}(d_{2}; \alpha, \beta )<0<\mu^{+}_{j}(d_{2}; \alpha, \beta )
\quad&\mbox{if}\ \det A_{j}(d_{2}; \alpha, \beta )<0,\\
0=\mu^{-}_{j}(d_{2}; \alpha, \beta )<\mu^{+}_{j}(d_{2}; \alpha, \beta )
\quad&\mbox{if}\ \det A_{j}(d_{2}; \alpha, \beta )=0,\\
0<\mbox{Re}\,\mu^{-}_{j}(d_{2}; \alpha, \beta )\le
\mbox{Re}\,\mu^{+}_{j}(d_{2}; \alpha, \beta )
\quad&\mbox{if}\ \det A_{j}(d_{2}; \alpha, \beta )>0.
\end{cases}
\end{equation}
By straightforward calculation, one can verify that
\begin{equation}
\begin{split}
&\det A_{j}(d_{2}; \alpha, \beta )\\
=&
\{(d_{1}+\alpha v^{*})\lambda_{j}+b_{1}u^{*}\}\lambda_{j}
d_{2}
-a_{2}v^{*}\lambda_{j}\alpha
+(a_{1}+d_{1}\lambda_{j})u^{*}\lambda_{j}\beta
+(c_{2}d_{1}\lambda_{j}+a_{1}c_{2}-a_{2}c_{1})v^{*},
\end{split}
\nonumber
\end{equation}
and therefore,
\begin{equation}\label{detsign}
\det A_{j}(d_{2}; \alpha, \beta)
\begin{cases}
<0\quad&\mbox{if}\ 
(\alpha, \beta )\in\mathcal{R}_{j}\ \ \mbox{and}\ \ 0<d_{2}<d_{*}^{(j)}(\alpha, \beta ),\\
=0\quad&\mbox{if}\ 
(\alpha, \beta )\in\mathcal{R}_{j}\ \ \mbox{and}\ \ d_{2}=d_{*}^{(j)}(\alpha, \beta ),\\
>0\quad&\mbox{if}\ 
(\alpha, \beta )\in\mathcal{R}_{j}\ \ \mbox{and}\ \ d_{2}>d_{*}^{(j)}(\alpha, \beta ),\\
\end{cases}
\end{equation}
and
\begin{equation}\label{detsei}
\det A_{j}(d_{2}; \alpha, \beta)>0\quad
\mbox{for any}\ d_{2}>0\ 
\mbox{if}\ (\alpha, \beta )\not\in\mathcal{R}_{j},
\end{equation}
where $\mathcal{R}_{j}$ and $d_{*}^{(j)}(\alpha, \beta )$ are 
defined by \eqref{Rj} and \eqref{djdef}, respectively.
In addition, it is noted that
$\det A_{0}(d_{2}; \alpha, \beta )>0$ 
for any $d_{2}>0$ and $(\alpha, \beta)\in\mathbb{R}_{\ge 0}$ 
under \eqref{wc}.

For any $(\alpha, \beta )\in\mathbb{R}_{\ge 0}^{2}$,
we define $J (\alpha, \beta )$ by the set of
$j\in\mathbb{N}$ such that
$(\alpha, \beta )\in\mathcal{R}_{j}$,
that is, 
$$J (\alpha, \beta ):=\{\,
j\in\mathbb{N}\,:\,(\alpha, \beta)\in\mathcal{R}_{j}\,\}.$$
%Since $\lambda_{0}=0$ and $c_{1}/c_{2}<a_{1}/a_{2}$,
%then \eqref{Rj} implies that
%$0\not\in J(\alpha, \beta )$
%for any $(\alpha, \beta )\in\mathbb{R}_{\ge 0}$.
In view of $\mathcal{R}_{j}$, one can verify that
$\sharp J(\alpha, \beta )<\infty$
for any $(\alpha, \beta)\in\mathbb{R}_{+}^{2}$.
We set
\begin{equation}
\widehat{d}_{*}(\alpha, \beta ):=
\begin{cases}
\max\{\,d^{(j)}_{*}(\alpha, \beta )\,:\,j\in J(\alpha, \beta )\,\}
\quad&\mbox{if}\ J(\alpha, \beta )\neq\emptyset,\\
0\quad&\mbox{if}\ J(\alpha, \beta )=\emptyset.
\end{cases} 
\end{equation}
We obtain from \eqref{mujsign}-\eqref{detsei} that
if $d_{2}>\widehat{d}_{*}(\alpha, \beta )$, then
$\mbox{Re}\,\mu_{j}(\alpha, \beta )>0$ for any 
$j\in\mathbb{N}\cup\{0\}$.
Consequently, it follows that 
$(u^{*}, v^{*})$ is linearly stable if $d>\widehat{d}_{*}(\alpha, \beta )$.
The proof of Lemma \ref{linlem} is complete.
\end{proof}

\begin{proof}[Proof of Theorem \ref{bifthm}]
In view of \eqref{ei} and \eqref{charaeq},
one can see that
$\mbox{Ker}\,L(d_{2})$ is nontrivial if and only if
$\det A_{j}(d_{2}; \alpha, \beta )=0$.
Therefore, it follows from \eqref{detsign} that
$\mbox{Ker}\,L(d_{2})$ is nontrivial if and only if
$d_{2}=d^{(j)}_{*}(\alpha, \beta )$,
and moreover,
$$
\mbox{Ker}\,L(d^{(j)}_{*}(\alpha, \beta ))=
%\{\,
%(\phi, \psi)=
\mbox{Span}\,\{\,
(1, \kappa_{j})\varPhi_{j}\,\},
$$
where $\kappa_{j}=
(b_{1}u^{*}+(d_{1}+\alpha v^{*})\lambda_{j})/
(c_{1}+\lambda_{j}\alpha )u^{*}$.
Here it is easily verified that
$$
\mbox{Ker}\,L^{*}(d^{(j)}_{*}(\alpha, \beta ))=
%\{\,
%(\phi, \psi)=
\mbox{Span}\,\{\,
(1, \kappa^{*}_{j})\varPhi_{j}\,\},
$$
where
$L^{*}(d^{(j)}_{*}(\alpha, \beta ))$ is the adjoint operator of
$L(d^{(j)}_{*}(\alpha, \beta ))$ and
$\kappa_{j}^{*}=
(b_{1}u^{*}+(d_{1}+\alpha v^{*})\lambda_{j})/
(b_{2}+\lambda_{j}\beta )v^{*}$.
By the Fredholm alternative theorem, one can see that
$$\mbox{Ran}\,L(d^{(j)}_{*}(\alpha, \beta ))=
%\mbox{Ker}\,L^{*}(d^{(j)}_{*})^{\bot}
\mbox{Span}\,\{\,
(1, \kappa^{*}_{j})\varPhi_{j}\,\}^{\bot},
$$
that is,
$L(d^{(j)}_{*}(\alpha, \beta ))$ is a Fredholm operator
with index zero.
%In order to clear the conditions 
For the application of
%\cite[Theorem 1.7]{CR} or 
\cite[Theorem 4.3]{SW},
we have to check the transversality condition that
\begin{equation}\label{range}
D_{(u,v),d_{2}}F(d^{(j)}_{*}(\alpha, \beta ), u^{*}, v^{*})
\biggl[
\begin{array}{l}
1\\
\kappa_{j}
\end{array}
\biggr]\varPhi_{j}\not\in
\mbox{Ran}\,L(d^{(j)}_{*}(\alpha, \beta ))\,
(\,=
\mbox{Span}\,\{\,
(1, \kappa^{*}_{j})\varPhi_{j}\,\}^{\bot}\,).
\end{equation}
Here it is easily verified that
$$
D_{(u,v),d_{2}}F(d^{(j)}_{*}(\alpha, \beta ), u^{*}, v^{*})
\biggl[
\begin{array}{l}
1\\
\kappa_{j}
\end{array}
\biggr]\varPhi_{j}=
\left[
\begin{array}{l}
0\\
-\kappa_{j}\lambda_{j}
\end{array}
\right]\varPhi_{j}.
$$
Obviously,
the right-hand side is not orthogonal to 
$\mbox{Span}\,\{\,
(1, \kappa^{*}_{j})\varPhi_{j}\,\}$,
and thereby,
\eqref{range} follows.
Therefore, we can use the bifurcation theorem
\cite[Theorem 4.3]{SW} to
obtain all assertions in Theorem \ref{bifthm}.
\end{proof}

\section{Asymptotic behavior of solutions as $\alpha$, $\beta\to\infty$}
This section is devoted to the proof of Theorem \ref{limthm}.
\begin{proof}[Proof of Theorem \ref{limthm}]
Assume \eqref{wc} and $N\le 3$.
Let $\{(u_{n}, v_{n})\}$ be any sequence of 
positive nonconstant solutions of \eqref{st}
with $(\alpha, \beta )=(\alpha_{n}, \beta_{n})$
satisfying
$\gamma_{n}:=\alpha_{n}/\beta_{n}\to\gamma
\in [0,\infty]$
and at least one of $\{\alpha_{n}\}$ and $\{\beta_{n}\}$
tends to $\infty$.
By Theorem \ref{aprthm} and the Sobolev embedding theorem,
we find nonnegative
functions
$(u_{\infty}, v_{\infty})\in\boldsymbol{X}_{p}$
with $p>N$ such that
\begin{equation}\label{limc1}
\lim_{n\to\infty} (u_{n}, v_{n})=
(u_{\infty}, v_{\infty})
\quad\mbox{weakly in}\ \boldsymbol{X}_{p}
\quad\mbox{and}\quad
\mbox{strongly in}\ C^{1}(\overline{\Omega})\times
C^{1}(\overline{\Omega }),
\end{equation}
passing to a subsequence if necessary.

%\underline{{\it Step 1}}\quad
As the first step of the proof,
we show that $u_{\infty}>0$ and 
$v_{\infty}>0$ in $\overline{\Omega}$.
We set 
$(\widetilde{u}_{n}(x),
\widetilde{v}_{n}(x)):=(u_{n}(x)/\|u_{n}\|_{\infty},
v_{n}(x)/\|v_{n}\|_{\infty})$.
Multiplying the first equation of \eqref{semi} by
$1/\|u_{n}\|_{\infty}$, we see that
$$
\Delta\widetilde{u}_{n}+V_{1}(u_{n}, v_{n}; \alpha_{n}, \beta_{n})
\widetilde{u}_{n}=0
\quad\mbox{in}\ \Omega,\quad
\partial_{\nu}\widetilde{u}_{n}=0\quad\mbox{on}\ \partial\Omega
$$
for any $n\in\mathbb{N}$.
By virtue of \eqref{V11} and $\|\widetilde{u}_{n}\|_{\infty}=1$,
the elliptic regularity theory and the strong maximum
principle (e.g., \cite{GT}) 
give a positive function $\widetilde{u}_{\infty}\in X_{p}$
with $p>N$ and $\|\widetilde{u}_{\infty}\|_{\infty}=1$
such that
$\widetilde{u}_{n}\to\widetilde{u}_{\infty}$
weakly in $X_{p}$ and strongly in $C^{1}(\overline{\Omega})$.
Obviously, the same procedure for the second equation of \eqref{semi}
implies that $\widetilde{v}_{n}\to\widetilde{v}_{\infty}$ 
weakly in $X_{p}$ and strongly in $C^{1}(\overline{\Omega})$
with some positive function 
$\widetilde{v}_{\infty}\in X_{p}$
with $p>N$ and $\|\widetilde{v}_{\infty}\|_{\infty}=1$.
Next, multiplying 
%\eqref{sta} and 
\eqref{stb}
by 
%$1/\|u_{n}\|_{\infty}$ and 
$1/\|v_{n}\|_{\infty}$
%respectively, 
and
integrating the resulting expression over $\Omega$,
one can see
\begin{equation}\label{intn}
%\int_{\Omega}
%\widetilde{u}_{n}(a_{1}-b_{1}u_{n}+c_{1}v_{n})=0
%\quad\mbox{and}\quad
\int_{\Omega}
\widetilde{v}_{n}(-a_{2}+b_{2}u_{n}-c_{2}v_{n})=0
\quad\mbox{for any}\ n\in\mathbb{N}
\end{equation}
by \eqref{stc}.
Suppose for contradiction that
$\min_{x\in\overline{\Omega}}u_{n}(x)\to 0$ as
$n\to\infty$, 
subject to a subsequence.
Thanks to \eqref{V11},
we can set $n\to\infty$ in the weak form of 
\begin{equation}\label{un}
\Delta u_{n}+V_{1}(u_{n}, v_{n}; \alpha_{n}, \beta_{n})u_{n}=0
\quad
\mbox{in}\ \Omega,\qquad
\partial_{\nu}u_{n}=0
\quad\mbox{on}\ \partial\Omega
\end{equation} 
and apply the combination of the elliptic regularity with
the strong maximum principle to reach
$u_{\infty}=0$ in $\overline{\Omega}$.
Setting $n\to\infty$ in \eqref{intn}, we know that
$$
\int_{\Omega}
\widetilde{v}_{\infty}(-a_{2}-c_{2}v_{\infty})=0.
$$
Hence this contradicts the fact that $\widetilde{v}_{\infty }>0$ in $\overline{\Omega}$.
Consequently,
we obtain $u_{\infty}>0$ in $\overline{\Omega}$ by contradiction.
We next suppose for contradiction that
$\min_{x\in\overline{\Omega}}v_{n}(x)\to 0$ as
$n\to\infty$,
passing to a subsequence.
Owing to the uniform estimate \eqref{V22},
the same limiting procedure 
in the weak form of 
\begin{equation}\label{vn}
\Delta v_{n}+V_{2}(u_{n}, v_{n}; \alpha_{n}, \beta_{n})v_{n}=0
\quad
\mbox{in}\ \Omega,\qquad
\partial_{\nu}v_{n}=0
\quad\mbox{on}\ \partial\Omega
\end{equation} 
leads to
$v_{\infty}=0$ in $\overline{\Omega}$.
In view of \eqref{V1}, setting $n\to\infty$ in \eqref{un} again, we know that
$u_{\infty}>0$ satisfies
$$
-d_{1}\Delta u_{\infty}= u_{\infty}(a_{1}-b_{1}u_{\infty})
\quad\mbox{in}\ \Omega,
\qquad\partial_{\nu}u_{\infty}=0\quad\mbox{on}\ \partial\Omega
$$
in the weak sense.
By the uniqueness of positive solutions to the above Neumann problem,
one can deduce $u_{\infty}=a_{1}/b_{1}$.
Then, setting $n\to\infty$ in \eqref{intn},
one can see
$$
\biggl(
-a_{2}+\dfrac{a_{1}b_{2}}{b_{1}}\biggr)\int_{\Omega}\widetilde{v}_{\infty}=0.
$$
From $\widetilde{v}_{\infty}>0$ in $\overline{\Omega}$, it follows that
$a_{1}/a_{2}=b_{1}/b_{2}$.
However this contradicts \eqref{wc}.
Therefore, we obtain $v_{\infty}>0$ in $\overline{\Omega}$
by contradiction.

%\underline{{\it Step 2}}\quad
As the next step of the proof, we show that
$u_{\infty}(x)=\tau^{*}v_{\infty}(x)$
for all $x\in\overline{\Omega}$ with
$\tau^{*}=u^{*}/v^{*}$.
Taking the inner product of \eqref{sta} 
with $u_{n}/v_{n}$, we get
\begin{equation}\label{an}
d_{1}\biggl(
\int_{\Omega }\dfrac{|\nabla u_{n}|^{2}}{v_{n}}
-
\int_{\Omega }\bigg(\dfrac{u_{n}}{v_{n}}\biggr)^{2}
\nabla u_{n}\cdot\nabla v_{n}\biggr)
+\alpha_{n}\int_{\Omega }
v_{n}^{2}\biggl|\nabla\biggl(
\dfrac{u_{n}}{v_{n}}\biggr)\biggr|^{2}=
\int_{\Omega}\dfrac{u_{n}^{2}}{v_{n}}
(a_{1}-b_{1}u_{n}+c_{1}v_{n})
%\quad\mbox{for all}\ n\in\mathbb{N}.
\end{equation}
for all $n\in\mathbb{N}$.
By virtue of \eqref{limc1} with
$u_{\infty}>0$ and $v_{\infty}>0$ in
$\Omega$,
we multiply \eqref{an} by $1/\alpha_{n}$ and
set $n\to\infty$ to get
$$
\int_{\Omega}v_{\infty}^{2}
\biggl|
\nabla\biggl(\dfrac{u_{\infty}}{v_{\infty}}\biggr)\biggr|^{2}=0.
$$
Since $u_{\infty}>0$ and $v_{\infty}>0$ in $\Omega$,
then $u_{\infty}/v_{\infty}=\tau^{*}$ in $\Omega$
with some positive constant $\tau^{*}$.
In order to get $\tau^{*}=u^{*}/v^{*}$, 
we set $n\to\infty$ in the integration of \eqref{sta} and \eqref{stb} 
to see
$$
\begin{cases}
a_{1}\|v_{\infty}\|_{1}+(-b_{1}\tau^{*}+c_{1})\|v_{\infty}\|^{2}_{2}=0,\\
-a_{2}\|v_{\infty}\|_{1}+(b_{2}\tau^{*}-c_{2})\|v_{\infty}\|^{2}_{2}=0.
\end{cases}
$$
It follows from the algebraic equations that $\tau^{*}=u^{*}/v^{*}$.

%\underline{{\it Step 3}}\quad
As the final step of the proof, 
we show the desired assertions (i) and (ii) of Theorem \ref{limthm}.
By virtue of $u_{\infty}=\tau^{*}v_{\infty}$ in 
\eqref{limc1}, we set $n\to\infty$ in \eqref{un} and \eqref{vn}
using \eqref{V1} and \eqref{V2}, respectively,
to reach the same equation of $v_{\infty}$ as follows:
\begin{equation}\label{lim2}
\begin{cases}
d\Delta v_{\infty }+
\xi^{*}v_{\infty }(v_{\infty }-v^{*})
=0\quad&\mbox{in}\ \Omega,\\
%\quad\mbox{where}\quad
%(d, \xi^{*}):=\biggl(
%\tau^{*}d_{1}+\gamma d_{2},
%\dfrac{\gamma a_{2}-\tau^{*}a_{1}}{v^{*}}\biggr),\\
\partial_{\nu}v_{\infty }=0
\quad&\mbox{on}\ \partial\Omega,
\end{cases}
\end{equation}
where
$$
(d, \xi^{*}):=
\begin{cases}
(
\tau^{*}d_{1}+\gamma d_{2},
(\gamma a_{2}-\tau^{*}a_{1})/v^{*})
\quad&\mbox{if}\ \gamma<\infty,\\
(d_{2}, a_{2}/v^{*})
\quad&\mbox{if}\ \gamma =\infty.
\end{cases}
$$
It is well-known that 
if $\xi^{*}<0$, then \eqref{lim2} is 
corresponding to the diffusive logistic equation
and $v_{\infty}=v^{*}$,
whereas if $\xi^{*}>0$, then \eqref{lim2} is
corresponding to the scalar field equation
and admits positive nonconstant 
solutions when $d>0$ is small.
The proof of Theorem \ref{limthm} is complete.
\end{proof}

%%%FUGURE1%%%%%%%%%%%%%%%%%%%%%%%%%%%%%%%%%%%%%%%%%%%%%%%%%%%%%%%%%%%

\begin{figure}
%[H]
\centering
\subfigure[$(\alpha, \beta )= (2,1)$]{
\includegraphics*[scale=.3]{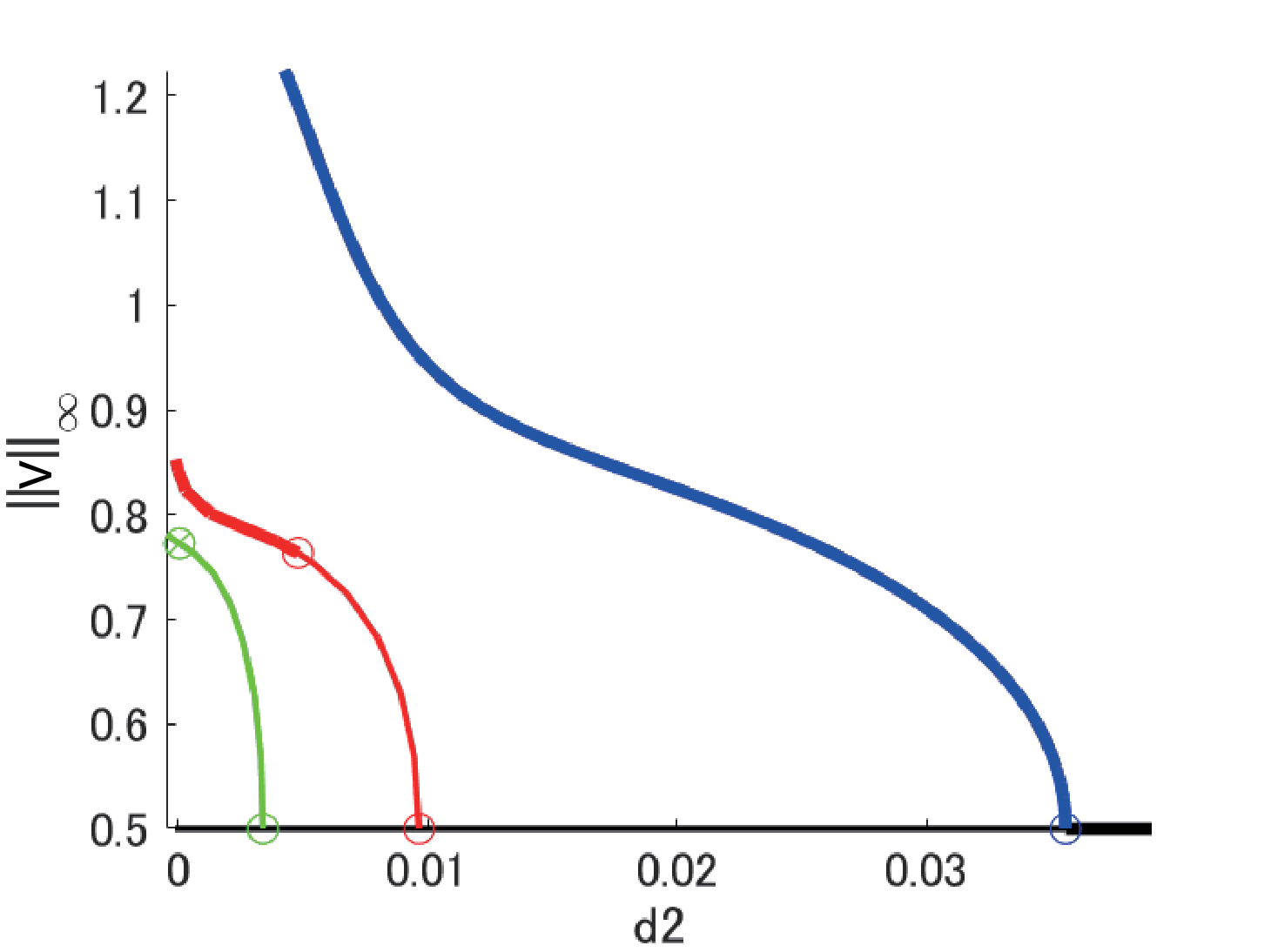}
\label{fig2a}}
\subfigure[$(\alpha, \beta )= (5,2.5)$]{
\includegraphics*[scale=.3]{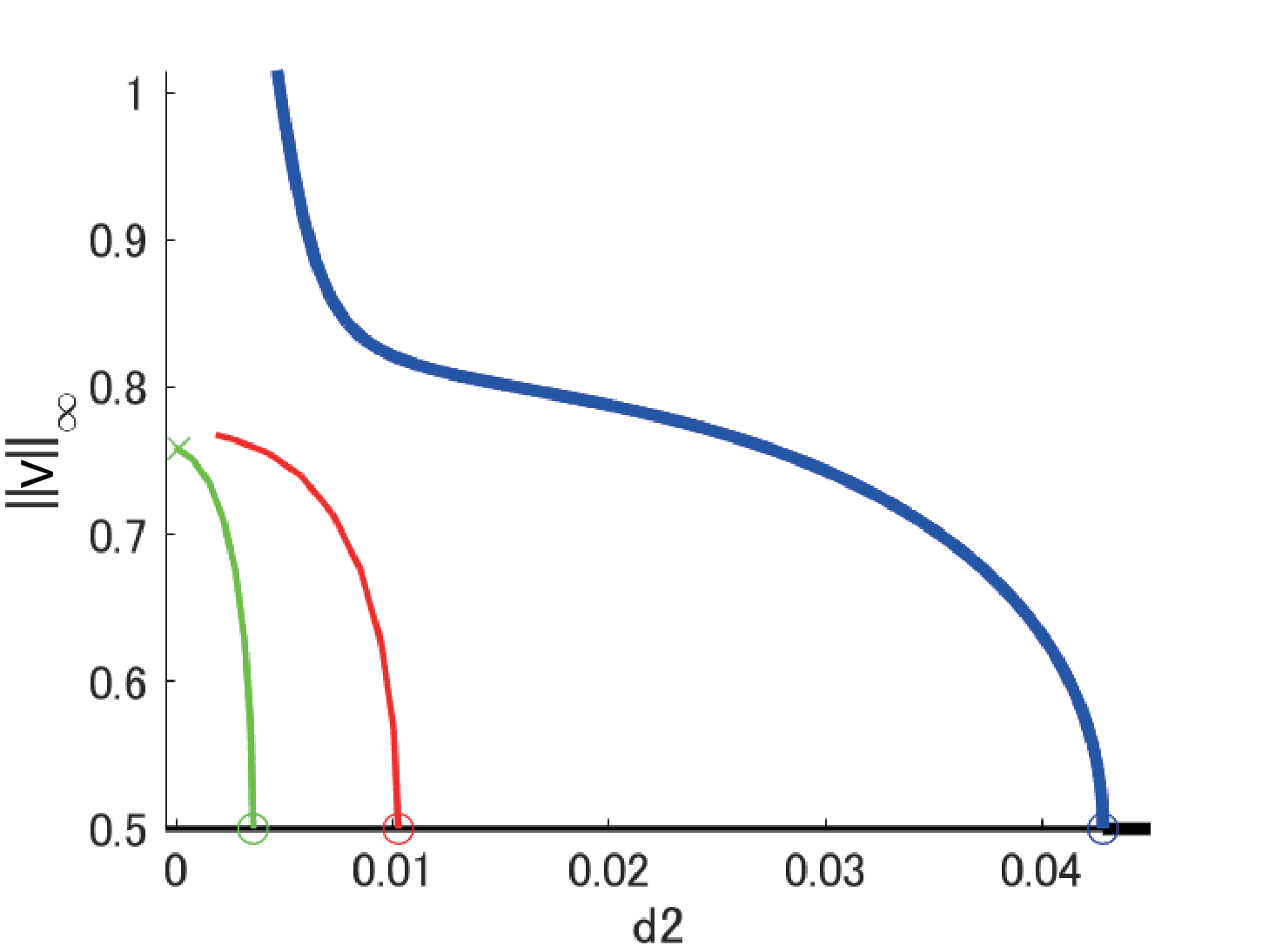}
\label{fig2b}
}
\centering
\subfigure[$(\alpha, \beta )= (10,5)$]{
\includegraphics*[scale=.3]{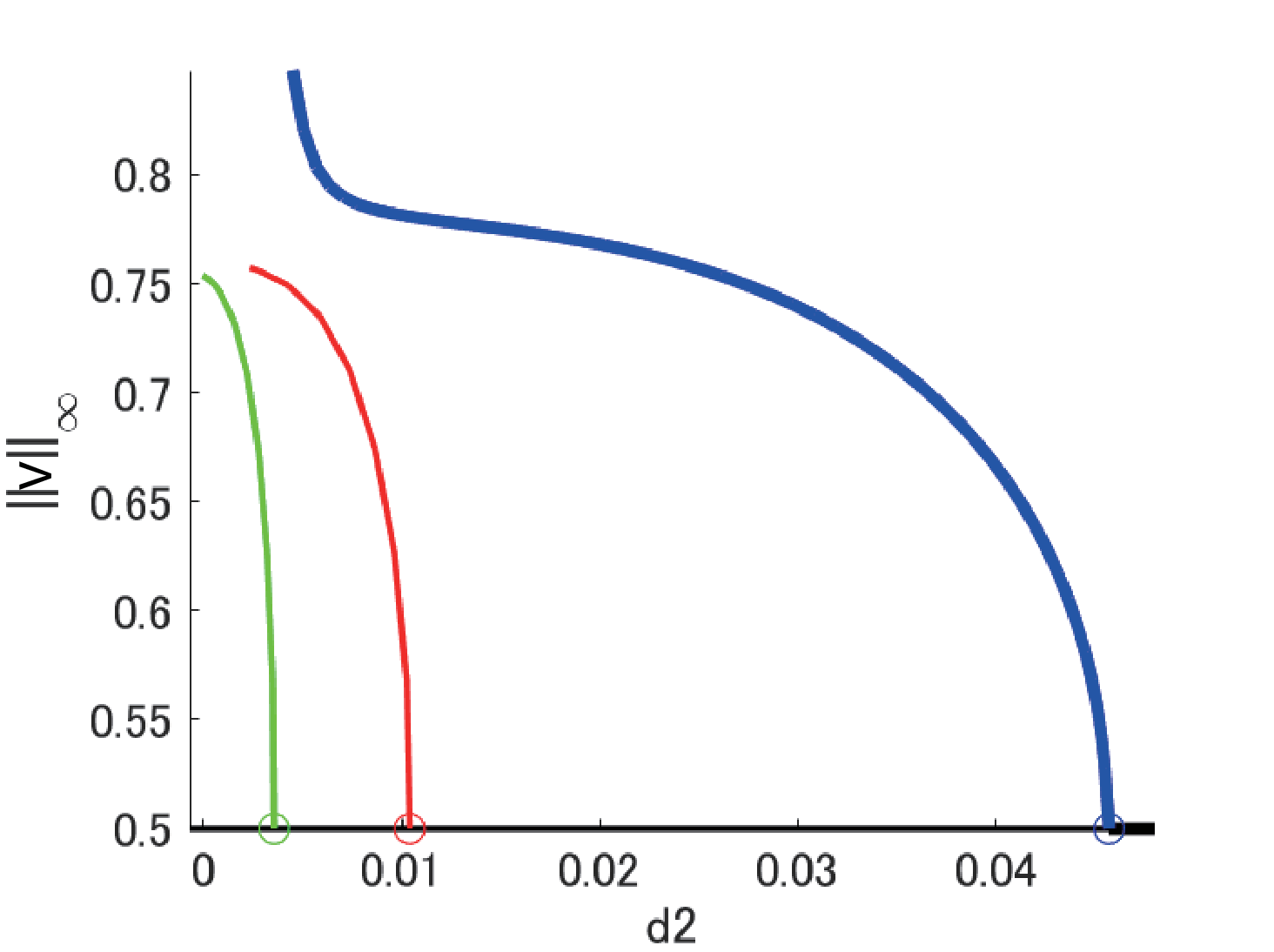}
\label{fig2c}}
\subfigure[$(\alpha, \beta )= (20,10)$]{
\includegraphics*[scale=.3]{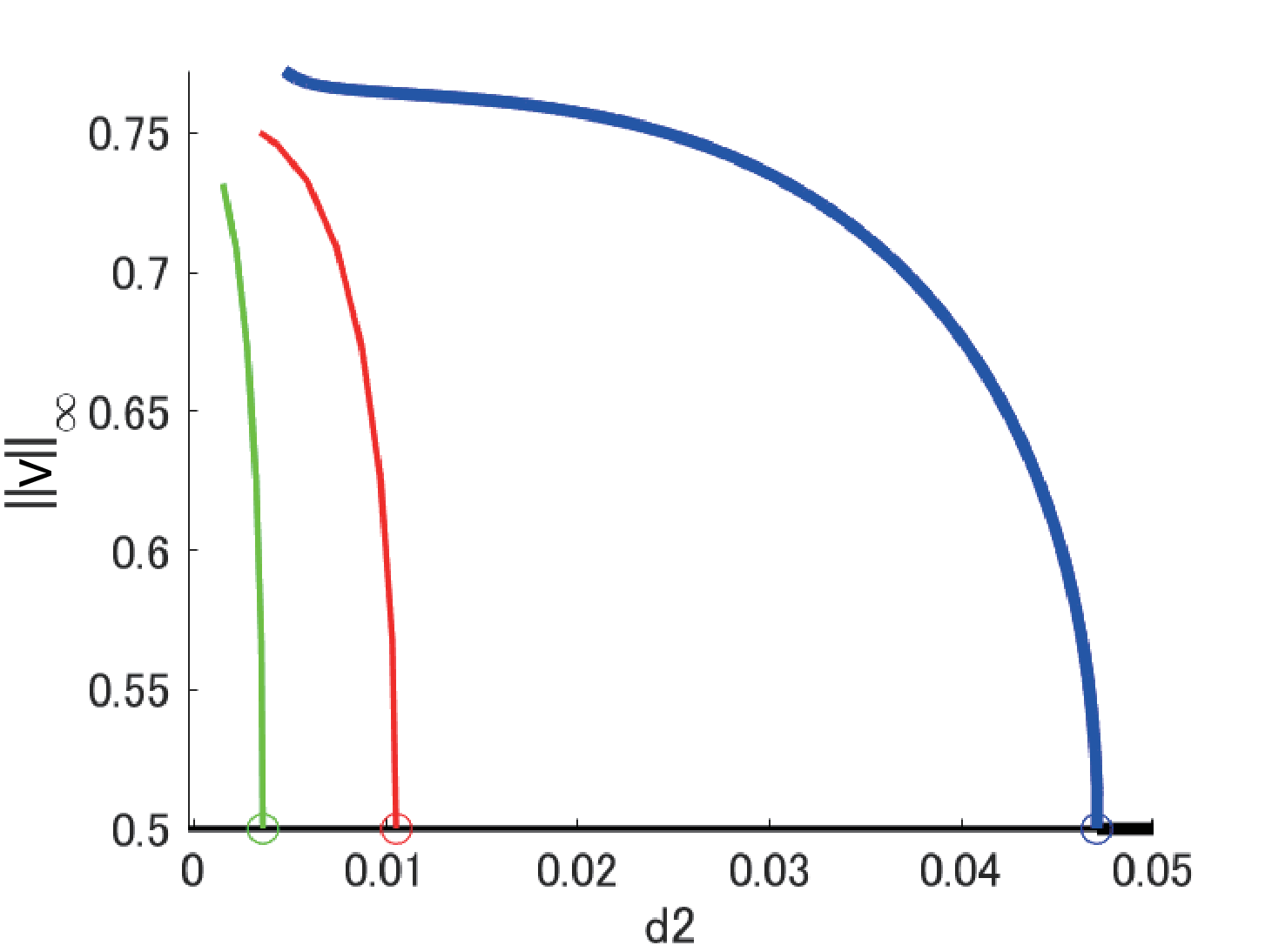}
\label{fig2d}
}
\centering
\subfigure[$(\alpha, \beta )= (50,25)$]{
\includegraphics*[scale=.3]{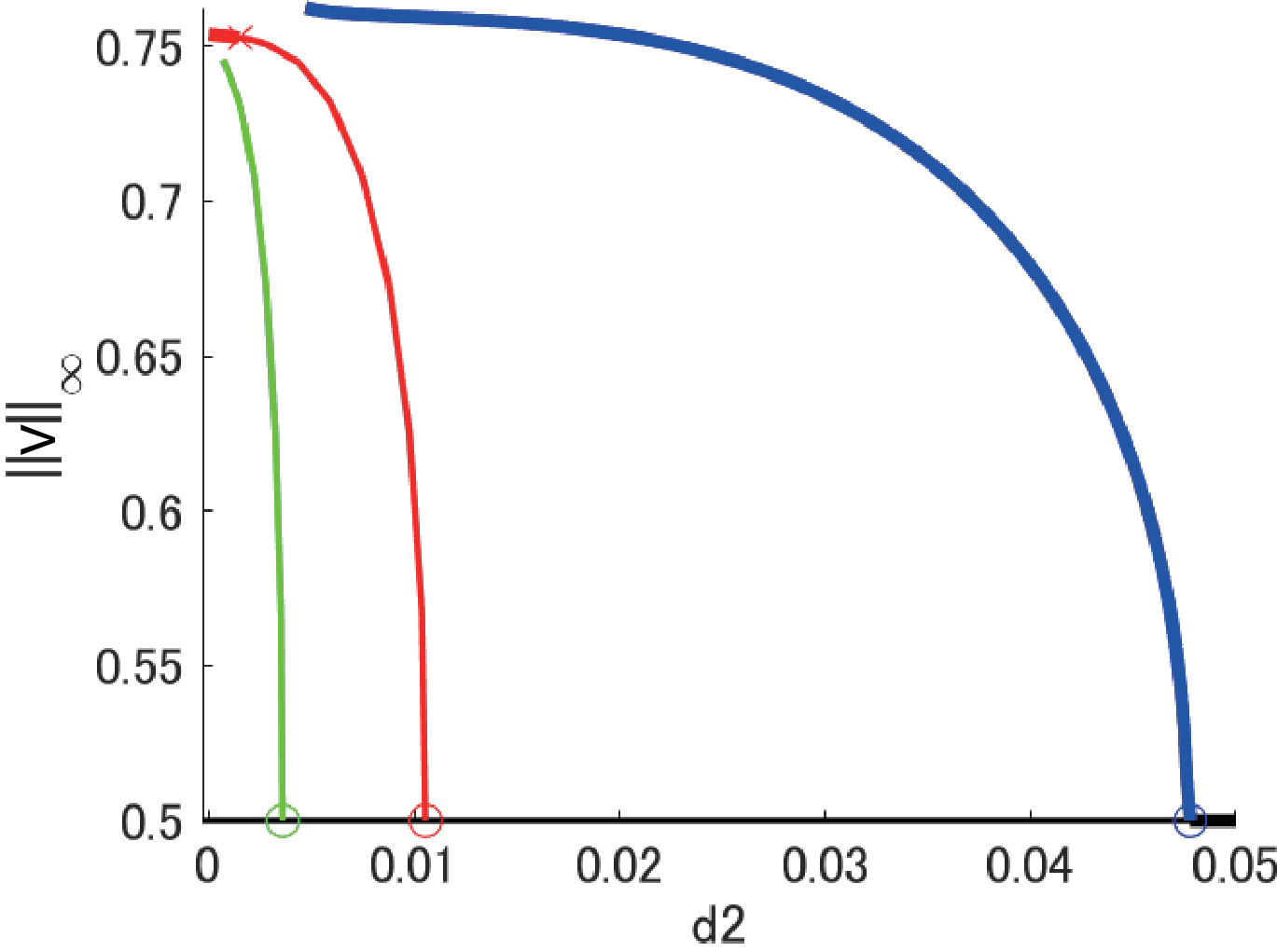}
\label{fig2e}}
\subfigure[\eqref{lim} with $\gamma =2$]{
\includegraphics*[scale=.3]{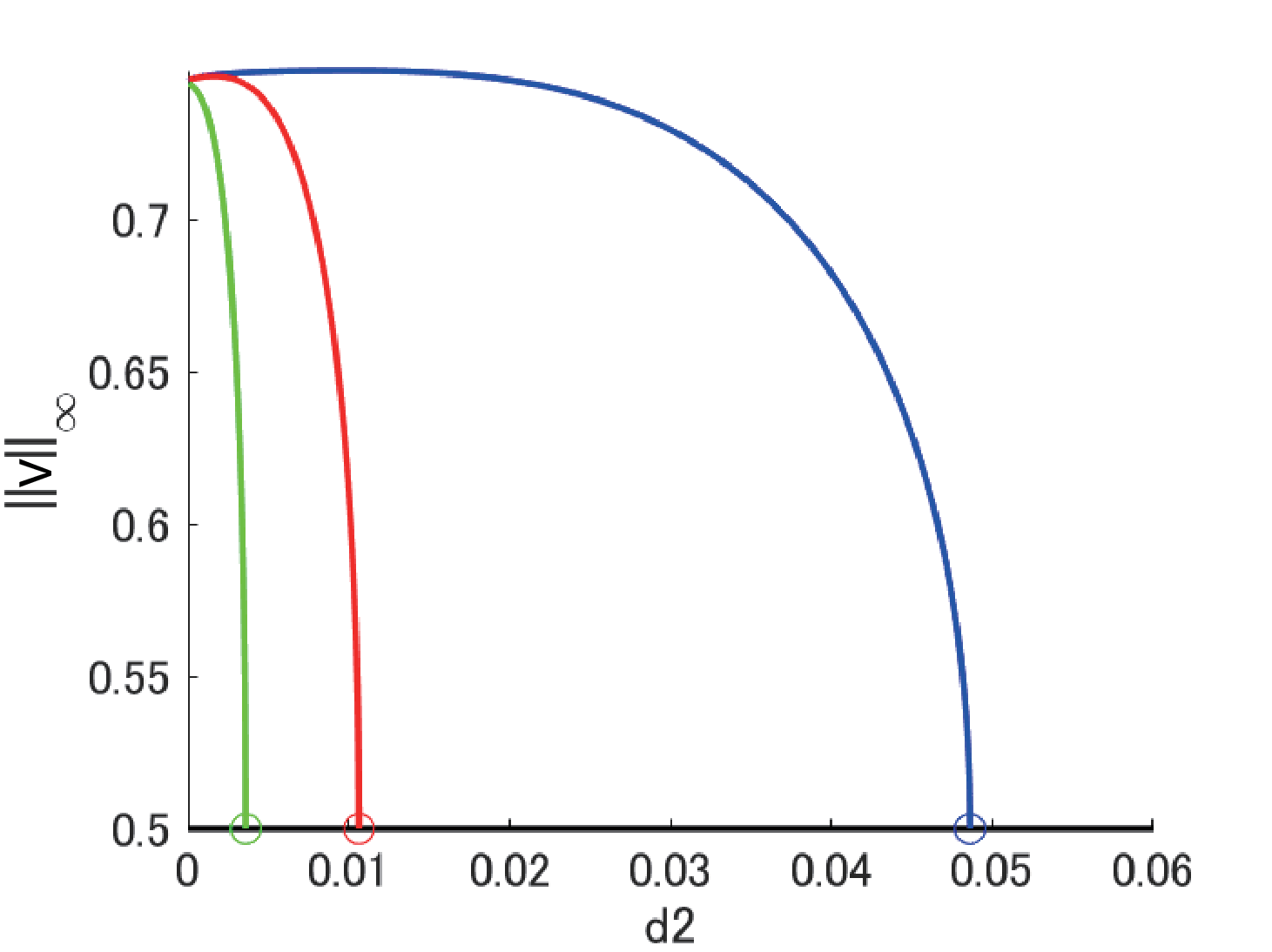}
\label{fig2f}
}
\caption{
Bifurcation branches of solutions to \eqref{st} and \eqref{lim}.}
\label{fig3}
\end{figure}

\section{Numerical bifurcation branches}
In this section, some numerical bifurcation branches of
steady-state solutions to
the one-dimensional version of \eqref{st} will be shown.
Our numerical simulations employ
the continuation software pde2path \cite{BKS, DRUW, Ue, UWR} 
based on an FEM discretization of
the stationary problem. For \eqref{st}, 
our setting of parameters in the numerical simulation is as
follows:
\begin{equation}\label{setp}
\Omega = (-0.5, 0.5), \quad
(d_1,a_1,a_2,b_1,b_2,c_1,c_2)=(0.004,1,1,4,5,2,3).
\end{equation}
and some pairs of $(\alpha, \beta )$
so that $\gamma=\alpha /\beta =2$.
In line with the theoretical analysis, 
$d_{2}$ will play an role of the bifurcation parameter.

It should be noted here that in the one-dimensional case where
$\Omega$ is an interval, 
the global bifurcation structure of 
all positive nonconstant solutions 
of the limiting system \eqref{lim} is well known.
To illustrate the structure, 
we classify the positive nonconstant solutions by node, and
introduce
$$
\mathcal{S}^{(j)}_{\infty}=\{\,(d_{2}, v)\,:\,
\mbox{$v$ satisfies \eqref{lim} and
$v'$ has exactly $j-1$ zeros in $\Omega$}\,\}.
$$
Then it is well-known that
$\mathcal{S}^{(j)}_{\infty}$ forms a pitchfork bifurcation
curve bifurcating from the positive constant solution
$v=v^{*}$ at 
$$
d_{2}=\dfrac{1}{\gamma}\biggl(
\dfrac{\xi^{*}v^{*}}{(j\pi )^{2}}-\tau^{*}d_{1}\biggr)
\,(\,=d^{(j)}_{*, \infty}\,),
$$
and moreover,
the upper and lower branches of 
$\mathcal{S}^{(j)}_{\infty}$ can be parameterized by 
$d_{2}\in (0, d^{(j)}_{*, \infty})$.
Here, for each $j\in\mathbb{N}$,
we call the subset of $\mathcal{S}^{(j)}_{\infty}$ for which $v'$ 
is monotone increasing (resp.\,decreasing) 
in a neighborhood of the left end of $\Omega$, the upper 
(resp.\,lower) branch.
In (f) of Figure 2, right (blue), center (red) and
left (green) curves numerically exhibit upper branches of
$\mathcal{S}^{(1)}_{\infty }$,
$\mathcal{S}^{(2)}_{\infty }$ and
$\mathcal{S}^{(3)}_{\infty }$
in case \eqref{setp} and $\gamma =2$.

In Figure 2, (a) to (e) numerically present the bifurcation branches 
of nonconstant solutions of \eqref{st}. 
In each of (a)-(e), right (blue), center (red) and left (green) 
bifurcation curves
correspond to global extensions of upper branches of 
$\varGamma_{1}$, $\varGamma_{2}$ and $\varGamma_{3}$
(obtained in Theorem \ref{bifthm}),
respectively.
The coefficients in the reaction terms and the linear diffusion coefficient 
$d_{1}$ are fixed to satisfy \eqref{setp}, 
while gradually increasing both $\alpha$ and $\beta$ 
by keeping their ratio $\gamma=\alpha /\beta$ equals to $2$ 
from (a) to (e). 
At this point, it can be observed that 
the bifurcation branches approach 
the bifurcation branches of the limiting system 
\eqref{lim} presented in (f).
The transition of the bifurcation branches 
from (a) to (e) supports numerically the content of (ii) of 
Theorem \ref{limthm}.

\end{document}